\documentclass[11pt]{amsart}
\usepackage[utf8]{inputenc}
\usepackage[T1]{fontenc}
\usepackage{lmodern}
\usepackage{amssymb,amsmath,amsthm,mathtools}
\usepackage{booktabs,array}
\usepackage[margin=1.15in]{geometry}
\usepackage{enumitem}
\usepackage[dvipsnames]{xcolor}
\usepackage[colorlinks=true,linkcolor=blue!50!black,citecolor=green!35!black,urlcolor=blue!50!black]{hyperref}
\hypersetup{pdftitle={Terao's freeness conjecture for arrangements of 15 and 16 lines},pdfauthor={Cristian Anghel, Rares Stan}}

\newtheorem{theorem}{Theorem}[section]
\newtheorem{proposition}[theorem]{Proposition}
\newtheorem{lemma}[theorem]{Lemma}
\newtheorem{corollary}[theorem]{Corollary}
\newtheorem*{maintheorem}{Main Theorem}
\theoremstyle{definition}
\newtheorem{definition}[theorem]{Definition}
\newtheorem{conjecture}[theorem]{Conjecture}
\theoremstyle{remark}
\newtheorem{remark}[theorem]{Remark}

\newcommand{\cA}{\mathcal{A}}
\newcommand{\cB}{\mathcal{B}}
\newcommand{\cL}{\mathcal{L}}
\newcommand{\CC}{\mathbb{C}}
\newcommand{\QQ}{\mathbb{Q}}
\newcommand{\ZZ}{\mathbb{Z}}
\newcommand{\PP}{\mathbb{P}}
\newcommand{\FF}{\mathbb{F}}
\newcommand{\mdr}{\operatorname{mdr}}
\newcommand{\Aut}{\operatorname{Aut}}
\newcommand{\Der}{\operatorname{Der}}

\title[Terao's freeness conjecture for 15 and 16 lines]{Terao's freeness conjecture for\\ arrangements of 15 and 16 lines}

\author{Cristian Anghel}
\address{Simion Stoilow Institute of Mathematics of the Romanian Academy, P.O.\ Box 1-764, RO-014700 Bucharest, Romania}
\email{cristian.anghel@imar.ro}

\author{Rareș Stan}
\address{Simion Stoilow Institute of Mathematics of the Romanian Academy, P.O.\ Box 1-764, RO-014700 Bucharest, Romania}
\email{rstan@imar.ro}

\subjclass[2020]{Primary 14N20; Secondary 52C35, 32S22, 05B35}
\keywords{Line arrangements, free arrangements, Terao's conjecture, rank 3 matroids, realization spaces, multiarrangements}

\begin{document}

\begin{abstract}
We prove that Terao's freeness conjecture holds for arrangements of $15$ and of $16$ lines in the complex projective plane. Together with the theorem of Barakat and Kühne for at most $14$ hyperplanes, this shows that freeness of a central arrangement of at most $16$ planes in $\CC^3$ is determined by its intersection lattice. The proof has a theoretical and a computational part. Combining published combinatorial criteria for freeness (due to Abe, Dimca, Dimca--Kühne--Pokora and Yoshinaga) with Bojanowski's Hirzebruch-type inequality and an elementary counting identity, we show that a counterexample would have exponents $(6,8)$ or $(7,7)$ for $15$ lines and $(7,8)$ for $16$ lines, and an intersection lattice subject to strong numerical constraints. We then enumerate all $22\,544\,280$ simple rank $3$ matroids satisfying these constraints, up to isomorphism, and determine the complex realization space of each of them exactly over $\QQ$. Exactly eleven of them are realizable over $\CC$, and on each of these the freeness of the realizations is constant. As a by-product we classify the free arrangements of $15$ and $16$ lines that are not divisionally free, have maximal multiplicity smaller than the smaller nontrivial exponent, and have at least four intersection points on every line.
\end{abstract}

\maketitle

\section{Introduction}

Let $\cA$ be a central arrangement of hyperplanes in $V=\CC^\ell$, let $S=\CC[x_1,\dots,x_\ell]$, and let $Q=\prod_{H\in\cA}\alpha_H$ be a defining polynomial. The module of logarithmic derivations
\[
D(\cA)=\{\theta\in\Der_\CC(S)\ :\ \theta(Q)\in QS\}
\]
is a reflexive graded $S$-module of rank $\ell$, and $\cA$ is called \emph{free} if $D(\cA)$ is a free $S$-module. In that case the degrees $d_1,\dots,d_\ell$ of a homogeneous basis are the \emph{exponents} of $\cA$. Terao's factorization theorem \cite{Ter81} states that the characteristic polynomial then factors as $\chi(\cA,t)=\prod_i(t-d_i)$; in particular the exponents of a free arrangement are determined by its intersection lattice $L(\cA)$. The converse question is the central open problem of the subject.

\begin{conjecture}[Terao, see {\cite[Conj.~4.138]{OT92}}]
Let $\cA$ and $\cB$ be arrangements over the same field with isomorphic intersection lattices. If $\cA$ is free, then $\cB$ is free.
\end{conjecture}

The conjecture is open already for $\ell=3$, i.e.\ for line arrangements in the projective plane, which is the case considered here. Over $\CC$ it has been verified for arrangements of at most $12$ lines by Faenzi and Vallès \cite{FV14}, of $13$ lines by Dimca, Ibadula and Măcinic \cite{DIM19}, and of $14$ lines, over fields of arbitrary characteristic, by Barakat, Behrends, Jefferson, Kühne and Leuner \cite{BBJKL21}. Barakat and Kühne \cite{BK23} proved it for all rank $3$ arrangements with at most $14$ hyperplanes, again over fields of arbitrary characteristic. Their proof enumerates the relevant simple rank $3$ matroids with at most $14$ elements and computes their moduli spaces, in particular their non-free loci. That approach becomes impractical for larger numbers of hyperplanes, since the number of simple rank $3$ matroids grows very fast with the number of elements \cite{MMIB12}.

Our main result is the following.

\begin{maintheorem}
Let $d\in\{15,16\}$, and let $\cA$ and $\cB$ be arrangements of $d$ lines in $\PP^2(\CC)$ with isomorphic intersection lattices. If $\cA$ is free, then $\cB$ is free.
\end{maintheorem}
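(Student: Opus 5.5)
We argue by contradiction. Suppose $\cA$ is free with exponents $(1,d_1,d_2)$, $d_1\le d_2$, $d_1+d_2=d-1$, and $\cB$ has the same intersection lattice but is not free. The plan is to show, using only combinatorial freeness criteria, that the common lattice $L=L(\cA)=L(\cB)$ is very special. Then an exhaustive computer enumeration of the remaining matroids, together with an exact computation of their realization spaces, closes the argument.

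The first step is to remove every lattice for which freeness is already known to be combinatorial. Let $m$ be the maximal multiplicity of a point of $L$. If $m\ge d_1$, Dimca's criterion together with Abe's results on the minimal degree of a logarithmic derivation ($\mdr$) shows that freeness is determined by $L$. Here one uses that a point of multiplicity $m$ forces $\mdr\le d-m$, and that $\mdr$ is then lattice-determined. Divisionally free lattices, in Abe's sense, are handled by his theorem. Yoshinaga's criterion, via the restricted multiarrangement on a line, handles the cases where some line carries few intersection points: lines with at most three points give a multiarrangement whose exponents are combinatorial. The Dimca--Kühne--Pokora criteria cover further lattices, such as those with a suitable deletion-restriction structure. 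The survivors therefore satisfy three conditions:
\begin{itemize}[leftmargin=2em]
\item $m<d_1$;
\item every line carries at least four intersection points;
\item $L$ is not divisionally free.
\end{itemize}

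The second step is numerical. Write $t_k$ for the number of points of multiplicity $k$. We use
\[
\sum_k t_k\binom{k}{2}=\binom{d}{2}, \qquad \sum_k t_k (k-1)=d_1d_2+d-1,
\]
where the second identity comes from factorization of $\chi$ via $b_2$. We also use Bojanowski's Hirzebruch-type inequality, valid when $t_d=t_{d-1}=0$, and the per-line counting identity $\sum_{p\in H}(m_p-1)=d-1$ with at least four summands. Running through all small exponent pairs, these should exclude everything except $(6,8)$ and $(7,7)$ for $d=15$, and $(7,8)$ for $d=16$. For each surviving pair we obtain a finite list of admissible vectors $(t_2,\dots,t_m)$ with $m\le 6$ or $m\le 7$.

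The third step is computational. We enumerate, up to isomorphism, all simple rank $3$ matroids on $d$ elements with these point-type vectors and line constraints. Canonical augmentation adds lines one at a time, pruning with the partial counting constraints. For each matroid we compute the realization space over $\QQ$ by Gröbner bases after fixing a projective frame, and discard those with empty complex realization space. Most should die quickly, for instance through non-realizable submatroids over $\CC$ such as Fano-type configurations. For each realizable one we decide freeness on the whole moduli space. We compute $D(\cA)$ in degree $d_1$ generically over the function field of the realization space, then identify the locus where $\dim D(\cA)_{d_1}$ jumps; by Saito/Ziegler, freeness holds exactly when there is a derivation of degree $d_1$. We must show that this locus is either empty or everything, on each irreducible component and hence on the whole space.

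The main obstacle is the third step: keeping the enumeration feasible (tens of millions of matroids) and computing realization spaces exactly rather than numerically. I would first attack it by aggressive pruning with local constraints, such as line-by-line point counts and the $\mdr$ bound, and by early realizability tests on small submatroids. The expected outcome is that only a handful of matroids are realizable, each with freeness constant on its moduli, and this contradicts the existence of $\cB$.
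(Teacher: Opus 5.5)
\textbf{Gap in the numerical reduction.} Your overall architecture matches the paper's: cut down with combinatorial freeness criteria, enumerate the residual rank~3 matroids, compute realization spaces exactly, and test freeness. But your second step lacks the ingredient that drives the paper's reduction, so it does not deliver the three types you claim.

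The only per-line information you use is $r_H\ge4$ and non-divisional freeness. The latter excludes only $r_H\in\{d_1+1,d_2+1\}$. What is missing is Abe's theorem that in a free arrangement every line has $r_H\le d_1+1$ or $r_H=d_2+1$. Applied to $\cA$, and combined with the division theorem applied to $\cB$, it gives $r_H\le d_1$, i.e.\ $e(H)\ge d_2$, for every line (Lemma~\ref{lem:abe}). Only then does $\sum_He(H)=3X-W$ become the budget identity \eqref{eq:budget} with nonnegative summands, so that $W\le\beta=3X-d\,d_2$. It is the clash $W_{\min}>\beta$ with Bojanowski's bound that removes types.

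Without this bound, type $(16;6,9)$ survives your constraints. Here $m\le5$ only gives $W\le X/2$. For instance, $(t_2,t_3,t_4,t_5)=(7,21,0,5)$ satisfies both counting identities and Bojanowski's inequality, with $W=15=W_{\min}$. Thirteen lines with $6$ points and three with $8$ points then give $\sum_He(H)=138=3X-W$, and $r_H=8$ is not excluded by non-divisional freeness. It is $e(H)\ge9$ on all $16$ lines that forces $W\le9<15$. You could instead kill this type with Abe's criterion $d_2\ge2d_1-3$, but you do not invoke it.

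More seriously, the residual classes for the surviving types become much larger than the paper's. In type $(15;6,8)$ the budget identity forces $W=9$ and $r_H=6$ on every line, leaving three matroids. Under your constraints $W$ is only confined to $9\le W\le21$, and lines with more than $d_1$ points are allowed. So the feasibility of your enumeration, and the per-line pruning it relies on, rest on a constraint you have not derived.

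\textbf{Errors in Step~3.} Your freeness test is wrong as stated. A non-free realization with $\mdr(f)<d_1$ has many derivations of degree $d_1$; in fact every realization of a lattice of this type has $\mdr(f)\le d_1$, by du Plessis--Wall. The correct test is the absence of a nonzero element of $D_0(\cA)$ of degree $d_1-1$, equivalently $\dim D_0(\cA)_{d_1}\le2$. Alternatively, as in Lemma~\ref{lem:freecrit}, test the absence of a nonzero element of $D(\cA^H,\mu^H)$ of degree $d_1-1$, via Yoshinaga's criterion.

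Also, constancy of freeness on each irreducible component of the realization space does not give constancy on the whole space. You must also check that all components give the same verdict. The paper finds a single Galois orbit or a single family for each of the eleven realizable lattices.

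Finally, Bojanowski's inequality needs $t_r=0$ for $r>2d/3$, not $t_d=t_{d-1}=0$. This is harmless here, since $m\le d_1-1$.
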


\begin{corollary}
Terao's conjecture holds for all central arrangements of at most $16$ planes in $\CC^3$.
\end{corollary}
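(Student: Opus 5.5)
The corollary follows by splitting on the number $d=|\cA|$ of planes. Let $\cA$ and $\cB$ be central arrangements of $d\le 16$ planes in $\CC^3$ with isomorphic intersection lattices, and assume $\cA$ is free. We want to show that $\cB$ is free.

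First we reduce to the essential, projectively simple situation. If $\cA$ is not essential, i.e.\ its rank is at most $2$, then $\cB$ has the same rank, since rank is read off the lattice. Every arrangement of rank at most $2$ is free: the rank-$2$ case is Saito's criterion, and $D$ splits off the trivial factor. So both are free. We may therefore assume that both arrangements are essential of rank $3$. Since arrangements are taken without repeated hyperplanes, projectivization gives arrangements of $d$ distinct lines in $\PP^2(\CC)$, and freeness of the central arrangement is by definition what is meant by freeness of the line arrangement.

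Next we split by the number of lines.

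\begin{itemize}
\item If $d\le 14$, the statement is exactly the theorem of Barakat and Kühne \cite{BK23} for rank $3$ arrangements with at most $14$ hyperplanes. This applies in particular over $\CC$, so $\cB$ is free.
\item If $d\in\{15,16\}$, the Main Theorem applies verbatim to the projectivized arrangements, so $\cB$ is free.
\end{itemize}

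The only point needing care is that isomorphism of intersection lattices of the central arrangements is the same as isomorphism of the lattices of the projective line arrangements. This holds because the latter lattice is just the former with the minimum and maximum relabelled; both record the incidences between lines and multiple points.

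There is no real obstacle here. All the substance lies in the Main Theorem; the corollary is only bookkeeping over the number of hyperplanes.
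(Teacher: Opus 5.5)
Your proposal is correct and follows the paper's own proof: rank at most two is always free, and in rank three you combine Barakat--Kühne for $d\le 14$ with the Main Theorem for $d\in\{15,16\}$. The extra bookkeeping you add, identifying the central and projective lattices and freeness notions, is harmless and agrees with the identification made in Section~\ref{subsec:lattices}.
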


\begin{proof}
Arrangements of rank at most two are free. For rank three, combine the Main Theorem with \cite[Thm.~1.2]{BK23}.
\end{proof}

\subsection*{Strategy}
Instead of examining all matroids with $15$ or $16$ elements, we first use theory to confine a potential counterexample to a very small region. Section~\ref{sec:reduction} combines the following ingredients:
\begin{itemize}
\item results of Dimca \cite{Dim17a} and Dimca--Kühne--Pokora \cite{DKP26}, which force $m\le d_1-1$, where $m$ is the maximal multiplicity of an intersection point and $(d_1,d_2)$, $d_1\le d_2$, are the nontrivial exponents of the free arrangement;
\item Abe's restriction on the number of points on a line of a free arrangement \cite{Abe14} together with his division theorem \cite{Abe16}, which force every line to contain at most $d_1$ intersection points;
\item Yoshinaga's criterion \cite{Yos05}, together with the $3$-transitivity of $\mathrm{PGL}_2$ on $\PP^1$ (compare Wakamiko's explicit formula \cite{Wak07}), which forces every line to contain at least four intersection points;
\item an elementary identity, which we call the \emph{budget identity} (Lemma~\ref{lem:counting}), and Bojanowski's Hirzebruch-type inequality \cite{Lan03,Pok21}.
\end{itemize}
The outcome (Proposition~\ref{prop:reduction}) is that a counterexample with $d\in\{15,16\}$ lines is of type $(d;d_1,d_2)\in\{(15;6,8),(15;7,7),(16;7,8)\}$ and an intersection lattice satisfying five explicit numerical conditions (R1)--(R5).

In Section~\ref{sec:enumeration} we enumerate all simple rank $3$ matroids satisfying (R1)--(R5), up to isomorphism: there are $3$, $6\,091\,145$ and $16\,453\,132$ of them, respectively. In Section~\ref{sec:realization} we compute the complex realization space of each of them exactly over $\QQ$, and we decide freeness on each Galois orbit of realizations by certificates that are verified in exact arithmetic. The results are in Section~\ref{sec:results}. Only $11$ of the $22\,544\,280$ matroids are realizable over $\CC$: one of type $(15;6,8)$, ten of type $(15;7,7)$, and none of type $(16;7,8)$. Each of them is either free for all realizations or non-free for all realizations. This proves the Main Theorem. As a by-product we obtain:

\begin{theorem}\label{thm:classification}
Let $\cA$ be a free arrangement of $d\in\{15,16\}$ lines in $\PP^2(\CC)$ with exponents $(1,d_1,d_2)$, $d_1\le d_2$, and maximal multiplicity $m$. Suppose that $m\le d_1-1$, that every line of $\cA$ contains at least four intersection points, and that $\cA$ is not divisionally free.
\begin{enumerate}[label=\textup{(\alph*)}]
\item If $d=16$, no such arrangement exists.
\item If $d=15$, then $L(\cA)$ is isomorphic to one of the six lattices $L_0,F_1,\dots,F_5$ of Table~\ref{tab:realizable}. They are the lattices of the reflection arrangement $\cA(5,5,3)$ ($L_0$), of Cuntz's one-parameter family \cite[§7]{ACKN16} ($F_1$), of two sub-arrangements of the reflection arrangement of the complex reflection group $G_{26}$ ($F_3$, $F_5$), and of rigid arrangements defined over $\QQ$ ($F_2$) and over $\QQ(\sqrt5)$ ($F_4$).
\end{enumerate}
Conversely, every complex arrangement whose lattice is one of $L_0,F_1,\dots,F_5$ is free and satisfies the hypotheses above.
\end{theorem}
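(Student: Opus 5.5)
The plan is to piggyback on the reduction and enumeration that the paper uses for the Main Theorem, run in the "free" direction rather than the "non-free" direction. Let $\cA$ satisfy the hypotheses: free with exponents $(1,d_1,d_2)$, $m\le d_1-1$, at least four points on every line, not divisionally free.

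\textbf{Confinement step.} First I confine $(d;d_1,d_2)$ and $L(\cA)$ to the region treated in Section~\ref{sec:enumeration}.

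Since $\cA$ is not divisionally free, no line $H$ has $|\cA^H|-1\in\{d_1,d_2\}$, by Abe's division theorem \cite{Abe16}. Combined with Abe's bound \cite{Abe14}, $|\cA^H|\le d_1+1$ or $|\cA^H|=d_2+1$, the latter is excluded. Since $d_1\ne |\cA^H|-1$ is also excluded, every line carries at most $d_1$ points.

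Together with $m\le d_1-1$, at least four points per line, and $d_1+d_2=d-1$, these are exactly the inputs of Proposition~\ref{prop:reduction}. That proposition, via the budget identity (Lemma~\ref{lem:counting}) and Bojanowski's inequality, gives $(d;d_1,d_2)\in\{(15;6,8),(15;7,7),(16;7,8)\}$. I need to check that the proof of Proposition~\ref{prop:reduction} uses only these properties and not non-freeness of some companion $\cB$. I expect it does, since non-freeness is irrelevant to the numerical constraints (R1)--(R5), which are derived from $\cA$ being free with the stated properties.

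\textbf{Enumeration step.} Then $L(\cA)$ is one of the $22\,544\,280$ enumerated matroids, and it must be $\CC$-realizable, hence one of the $11$ in Table~\ref{tab:realizable}. None is of type $(16;7,8)$, which gives (a).

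For (b), I discard those of the eleven whose realizations are non-free. The realizations of each lattice are all free or all non-free, so this is well defined. The survivors should be exactly $L_0,F_1,\dots,F_5$. Identifying them is then a matter of matching lattices:
\begin{itemize}
\item $L_0$ with $\cA(5,5,3)$;
\item $F_1$ with Cuntz's family \cite[§7]{ACKN16};
\item $F_3,F_5$ with explicit subarrangements of the $G_{26}$ arrangement;
\item $F_2,F_4$ as the rigid cases, read off from the zero-dimensional realization spaces over $\QQ$ and $\QQ(\sqrt5)$.
\end{itemize}

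\textbf{Converse.} Any complex realization of one of these six lattices is free by the constancy result. The hypotheses $m\le d_1-1$ and at least four points per line are lattice-theoretic, so they can be read off the table. Non-divisional freeness is also lattice-determined: I must check that no line has $|\cA^H|-1\in\{d_1,d_2\}$, and, as a precaution, that no restriction-deletion chain exists. The latter follows once no line qualifies, since divisional freeness needs a first division step at some line.

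\textbf{Main obstacle.} The main risk is whether Proposition~\ref{prop:reduction} really applies to free arrangements here without the counterexample hypothesis. If its proof uses the counterexample hypothesis, for instance to exclude divisionally free lattices, which is exactly our hypothesis, then this is only bookkeeping. If it also invokes non-freeness of $\cB$ somewhere, I would rederive (R1)--(R5) directly from the three bullet constraints above.
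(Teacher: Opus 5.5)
Your proposal is correct and follows the paper's proof: you rederive (R1)--(R5) directly from the hypotheses, with $r_H\le d_1$ obtained from Abe's bound plus non-divisional freeness, then use the enumeration, the realizability screening and the constancy of freeness, and check the converse from $m=5$ and $r_H\in\{6,7\}$. As you suspected, Proposition~\ref{prop:reduction} uses $\cB$ only through Lemmas~\ref{lem:dimca}--\ref{lem:wakamiko}, whose conclusions are here hypotheses or consequences of non-divisional freeness, so your fallback is exactly what the paper does; note that $r_H-1\notin\{d_1,d_2\}$ for all $H$ is simply the definition of not being divisionally free, not a consequence of Abe's division theorem.
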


Here $\cA$ is \emph{divisionally free} if $\chi(\cA^H,t)$ divides $\chi(\cA,t)$ for some line $H$ (for $\ell=3$ this agrees with Abe's definition \cite{Abe16}). Divisional freeness, the condition $m\ge d_1$, and the existence of a line with at most three points are three standard situations in which freeness is known to be combinatorial (Section~\ref{sec:reduction}). Theorem~\ref{thm:classification} describes the free arrangements of $15$ and $16$ lines not covered by them. Abe \cite{Abe14} gives further combinatorial criteria. For instance, freeness is combinatorial when $d_1\le5$ \cite[Cor.~5.5]{Abe14} or $d_2\ge2d_1-3$ \cite[Cor.~5.4]{Abe14}. These already exclude the five types that we remove in Table~\ref{tab:reduction} by a different argument. The six lattices of Theorem~\ref{thm:classification} are not covered by these criteria, nor by \cite[Thm.~2.8(2)]{Abe14} (see Remark~\ref{rem:abe28}).

\subsection*{On the computer-assisted part}
The code, the results and a script that reruns the enumeration and the screening are available as ancillary files (Section~\ref{sec:reproducibility}). The programs implement the specification justified in Sections~\ref{sec:reduction}--\ref{sec:realization}.

\section{Preliminaries}\label{sec:prelim}

\subsection{Line arrangements and their lattices}\label{subsec:lattices}
Throughout, $\cA$ is an arrangement of $d$ distinct lines in $\PP^2=\PP^2(\CC)$, or equivalently a central arrangement of $d$ planes in $\CC^3$. A pencil (all lines through one point) is always free, so we may and do assume that the lines are not all concurrent. For an intersection point $P$ let $m_P$ be the number of lines through $P$. Let $t_r$ be the number of points of multiplicity $r$, and let $m=\max_P m_P$. For a line $H$ let $r_H=|\cA^H|$ be the number of intersection points on $H$, and put
\[
e(H)=\sum_{P\in H}(m_P-2).
\]
Since every other line meets $H$ in exactly one point, $\sum_{P\in H}(m_P-1)=d-1$, and hence
\begin{equation}\label{eq:eH}
e(H)=d-1-r_H.
\end{equation}
The intersection lattice $L(\cA)$ is determined by the simple rank $3$ matroid on the set of lines whose rank $2$ flats are the sets of lines through the intersection points. We encode it by the family of its \emph{blocks}, the sets of lines through the points with $m_P\ge3$; any two blocks share at most one line. We identify $L(\cA)$ with this matroid and use the words lattice and matroid interchangeably; in the language of incidence geometry, such a matroid is a \emph{linear space}. All quantities above depend only on the lattice. The characteristic polynomial is $\chi(\cA,t)=(t-1)\chi_0(\cA,t)$ with
\begin{equation}\label{eq:chi}
\chi_0(\cA,t)=t^2-(d-1)t+b,\qquad b=\sum_P(m_P-1)-(d-1).
\end{equation}

\subsection{Freeness and exponents}
If $\cA$ is free, its exponents are $(1,d_1,d_2)$ with $d_1+d_2=d-1$ and $d_1d_2=b$ \cite{Ter81}. We always order them so that $d_1\le d_2$, and we say that $L(\cA)$ is of \emph{type} $(d;d_1,d_2)$ whenever $\chi_0(\cA,t)=(t-d_1)(t-d_2)$ with integers $d_1\le d_2$.

Let $f=Q(\cA)$, and let $\mdr(f)$ be the minimal degree of a nonzero syzygy $af_x+bf_y+cf_z=0$. These Jacobian syzygies form the module $D_0(\cA)$, and $D(\cA)=S\theta_E\oplus D_0(\cA)$, where $\theta_E$ is the Euler derivation. Hence a free arrangement has $\mdr(f)=d_1$. Let $\tau(\cA)=\sum_P(m_P-1)^2$ be the global Tjurina number of $f$, and put $\tau_{\max}(d,r)=(d-1)^2-r(d-1-r)$.

\begin{theorem}[{\cite[Cor.~1.2]{Dim17b}}, based on du Plessis--Wall \cite{dPW99}]\label{thm:dPW}
If $r=\mdr(f)<d/2$, then $\cA$ is free if and only if $\tau(\cA)=\tau_{\max}(d,r)$.
\end{theorem}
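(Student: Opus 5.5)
The plan is to translate freeness into a statement about a rank $2$ vector bundle on $\PP^2$ and read off $\tau(\cA)$ from its Chern classes. Write $J=(f_x,f_y,f_z)$ and let $M(f)=S/J$ be the Milnor algebra. The module $D_0(\cA)$ is the syzygy module of $J$. It is reflexive of rank $2$ over $S$, so its sheafification $E$ is a rank $2$ vector bundle on $\PP^2$. There is an exact sequence
\[
0\to E\to \mathcal{O}(-d+1)^3 \to \mathcal{I}_Z\to 0
\]
(up to the appropriate twist), where $Z$ is the Jacobian scheme, i.e.\ the singular points of the curve $f=0$. Its length is the global Tjurina number, $\deg Z=\tau(\cA)=\sum_P(m_P-1)^2$; this holds because at an ordinary $m$-fold point of a line arrangement the local Tjurina number equals the Milnor number $(m-1)^2$. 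The first step is to compute the Chern classes of $E$ from this sequence: $c_1(E)$ is determined by $d$, and $c_2(E)$ equals $(d-1)^2-\tau(\cA)$ after normalizing the twist so that generators of $D_0$ in degree $k$ correspond to sections of $E(k)$.

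Next I would use $r=\mdr(f)$ to construct a nonzero section $s\in H^0(E(r))$ of minimal degree. Minimality makes $E(r-1)$ have no sections. Since $r<d/2$, the twist $E(r)$ has $c_1(E(r))=2r-(d-1)\le 0$, so the section $s$ vanishes in codimension $2$; any divisorial zero would yield a syzygy of smaller degree. This gives
\[
0\to\mathcal{O}\xrightarrow{s}E(r)\to\mathcal{I}_W(2r-d+1)\to 0
\]
with $W$ zero-dimensional. Computing $c_2(E(r))=\deg W\ge 0$ in terms of $\tau$ yields $\deg W=\tau_{\max}(d,r)-\tau(\cA)$. This is du Plessis--Wall's upper bound $\tau\le\tau_{\max}(d,r)$, and it also shows that equality holds exactly when $W=\emptyset$.

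The two directions then follow. If $W=\emptyset$, then $E(r)$ is an extension of $\mathcal{O}(2r-d+1)$ by $\mathcal{O}$. Such extensions split because $H^1(\PP^2,\mathcal{O}(k))=0$ for all $k$. Hence $E\cong\mathcal{O}(-r)\oplus\mathcal{O}(-(d-1-r))$, and since $D_0(\cA)=\Gamma_*(E)$ by reflexivity, $D_0(\cA)$ is free with generators in degrees $r$ and $d-1-r$; so $\cA$ is free. Conversely, if $\cA$ is free, then $D_0$ is free with exponents $(d_1,d_2)$, where $d_1=\mdr(f)=r$ and $d_1+d_2=d-1$. The Chern class computation then gives $\tau=(d-1)^2-d_1d_2=\tau_{\max}(d,r)$; alternatively, $W=\emptyset$ because the section $s$ is a basis element.

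The main obstacle is the bookkeeping in the first step. One must check that the sheafified Jacobian syzygy sequence has cokernel exactly $\mathcal{I}_Z$, of length $\tau$, and one must get the twists right so that $\deg W=\tau_{\max}(d,r)-\tau$ comes out exactly. The hypothesis $r<d/2$ is used only to ensure that the minimal section has no divisorial part. Without that hypothesis the quotient need not be torsion-free of the stated form, and du Plessis--Wall give a different upper bound.
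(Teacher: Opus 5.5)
Your proof is correct. It takes a different route from the paper, which does not prove the statement at all but quotes it from Dimca, building on du Plessis and Wall.

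You argue directly with the bundle $E=\widetilde{D_0(\cA)}$. The Chern class computation $c_2(E(r))=\tau_{\max}(d,r)-\tau(\cA)$, combined with the Bourbaki sequence of a syzygy of minimal degree, identifies the defect $\tau_{\max}(d,r)-\tau(\cA)$ with the length of the zero scheme $W$ of that syzygy. This gives at once the upper bound $\tau(\cA)\le\tau_{\max}(d,r)$ and the equality case, with the splitting coming from $H^1(\PP^2,\mathcal{O}(k))=0$. Your argument is short and self-contained. The cited route supplies more: du Plessis and Wall also give a lower bound, and a sharper upper bound when $2r\ge d$. None of that is needed here.

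The one bookkeeping point worth writing out is $\deg Z=\tau(\cA)$. In the chart $z=1$, with $g=f(x,y,1)$, the Euler relation gives $(f_x,f_y,f_z)=(g_x,g_y,dg)$. So the local length of $Z$ is the local Tjurina number, which equals $(m_P-1)^2$ at an ordinary $m_P$-fold point.

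Your final remark on the hypothesis is wrong, though harmlessly so. The inequality $c_1(E(r))\le0$ plays no role in excluding a divisorial zero of $s$; minimality of $r$ does that, as you say yourself.

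In fact $r<d/2$ is used nowhere in your argument. Suppose $r\ge d/2$ and $W=\emptyset$. Then the splitting $E\cong\mathcal{O}(-r)\oplus\mathcal{O}(r-d+1)$ would produce a nonzero element of $D_0(\cA)$ of degree $d-1-r<r$, contradicting minimality. So for $r\ge d/2$ both sides of the equivalence fail, and your argument proves the statement for every $r$.

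The claim that without the hypothesis the quotient need not have the stated form should be deleted. The sequence $0\to\mathcal{O}\to E(r)\to\mathcal{I}_W(2r-d+1)\to0$ exists for a minimal section whatever $r$ is.
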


\subsection{Ziegler's multirestriction and Yoshinaga's criterion}\label{subsec:ziegler}
For a line $H\in\cA$, the \emph{Ziegler multirestriction} \cite{Zie89} is the multiarrangement $(\cA^H,\mu^H)$ on $H\cong\PP^1$ consisting of the $r_H$ points of $H$, where $P$ has multiplicity $\mu^H(P)=m_P-1$. It is free (being of rank $2$), with exponents $(e_1^H,e_2^H)$, $e_1^H\le e_2^H$, $e_1^H+e_2^H=\sum_{P\in H}(m_P-1)=d-1$. Moreover $e_1^H$ is the minimal degree of a nonzero element of $D(\cA^H,\mu^H)$.

\begin{theorem}[Yoshinaga {\cite[Thm.~3.2]{Yos05}}; see also {\cite[Thm.~2.3]{Abe14}}]\label{thm:yoshinaga}
For every line $H$ of an arrangement $\cA$ of lines in $\PP^2(\CC)$, $b\ge e_1^He_2^H$, and $\cA$ is free if and only if $b=e_1^He_2^H$.
\end{theorem}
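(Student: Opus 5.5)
The statement immediately above is Yoshinaga's criterion (Theorem~\ref{thm:yoshinaga}). I will prove it through the Ziegler multirestriction, that is, by comparing $D(\cA)$ with $D(\cA^H,\mu^H)$. Choose coordinates with $H=\{z=0\}$.

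\emph{Step 1: the restriction map.} Ziegler's map sends a derivation $\theta\in D(\cA)$ with $\theta(z)\in zS$ to its reduction modulo $z$. This gives a degree-preserving map
\[
D_H(\cA)=\{\theta\in D(\cA):\theta(z)\in zS\}\longrightarrow D(\cA^H,\mu^H).
\]
Since $D(\cA)=S\theta_E\oplus D_H'$ for a suitable complement, we can write $D_0$-type modules as $D_H(\cA)/S\theta_E$. We then obtain an exact sequence
\[
0\to D_0(\cA)(-1)\xrightarrow{\cdot z} D_0(\cA)\to D(\cA^H,\mu^H)
\]
after identifying the image. Ziegler's theorem states that if $\cA$ is free, this last map is surjective. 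It follows that the exponents of $\cA$ are $(1,e_1^H,e_2^H)$, and therefore $b=d_1d_2=e_1^He_2^H$.

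\emph{Step 2: the sheaf version and the inequality.} Pass to the reflexive sheaf $\mathcal{E}=\widetilde{D_0(\cA)}$ on $\PP^2$, which is a rank two vector bundle. Its Chern classes are read off from the characteristic polynomial: $c_1=-(d-1)$ and $c_2=b$, up to the standard normalization. The sequence above sheafifies to
\[
0\to\mathcal{E}(-1)\to\mathcal{E}\to \mathcal{E}|_H\to 0,
\]
and the image of $\mathcal{E}|_H\to \widetilde{D(\cA^H,\mu^H)}\cong\mathcal{O}_{\PP^1}(-e_1^H)\oplus\mathcal{O}_{\PP^1}(-e_2^H)$ is an inclusion of rank two bundles on $\PP^1$ of equal degree. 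I need to check the degree equality via the identity $e_1^H+e_2^H=d-1$. The inclusion is then an isomorphism, so the splitting type of $\mathcal{E}$ on $H$ is $(e_1^H,e_2^H)$.

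Now $c_2(\mathcal{E})-c_2$ of the split bundle with the same restriction measures the non-splitting. Concretely, use the bound for a rank two bundle with splitting type $(a,c)$ on a line: compute $\chi$ of $\mathcal{E}(k)$ against the split bundle $\mathcal{O}(-e_1)\oplus\mathcal{O}(-e_2)$, using the restriction sequence inductively in $k$. Lifting sections from $H$ is obstructed by $H^1(\mathcal{E}(k-1))$, and summing the dimension counts gives
\[
b-e_1^He_2^H=\sum_k h^1(\mathcal{E}(k))\ge 0 .
\]

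\emph{Step 3: the equality case.} Equality forces $H^1_*(\mathcal{E})=0$. By Horrocks' criterion, $\mathcal{E}$ then splits, and since $D_0(\cA)$ is reflexive, hence saturated, this means $D(\cA)$ is free. The converse direction is Step 1.

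\emph{Main obstacle.} The delicate point is Step 2: identifying $\mathcal{E}|_H$ with the multirestriction sheaf, in particular the surjectivity after sheafification, which needs a local computation at each point of $H$ using $\mu^H(P)=m_P-1$. The second difficulty is making the telescoping of $h^1$ terms produce exactly $b-e_1^He_2^H$. I would first try the Riemann--Roch comparison of $\chi(\mathcal{E}(k))$ with $\chi(\mathcal{O}(k-e_1)\oplus\mathcal{O}(k-e_2))$ on $\PP^2$, where the difference is precisely $c_2-e_1e_2$.
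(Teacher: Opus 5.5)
The paper does not prove this statement; it cites it from Yoshinaga. Your outline is the standard vector-bundle route: sheafify $D_0(\cA)$, identify $\mathcal{E}|_H$ with the multirestriction, compare with the split bundle, and finish with Horrocks. However, one step fails as written: the identity $b-e_1^He_2^H=\sum_k h^1(\mathcal{E}(k))$ is false. Its right-hand side depends only on $\mathcal{E}$, while its left-hand side depends on $H$. Take four concurrent lines $H_1,\dots,H_4$ together with two general lines $H_5,H_6$. Then $b=7$. On $H_1$ the multiplicities are $3,1,1$, so $(e_1^{H_1},e_2^{H_1})=(2,3)$; on $H_5$ there are five simple points, so $(e_1^{H_5},e_2^{H_5})=(1,4)$. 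The left-hand side is therefore $1$ for $H_1$ and $3$ for $H_5$.

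What your telescoping actually computes is the following. Put $\mathcal{F}=\mathcal{O}(-e_1^H)\oplus\mathcal{O}(-e_2^H)$ on $\PP^2$, and let $C$ be the cokernel of the Ziegler map $D_0(\cA)\to D(\cA^H,\mu^H)$. Then $h^0(\mathcal{F}(k))-h^0(\mathcal{E}(k))=\sum_{j\le k}\dim C_j$. For $k\gg0$ the left side equals $\chi(\mathcal{F}(k))-\chi(\mathcal{E}(k))=c_2(\mathcal{E})-e_1^He_2^H$, so $b-e_1^He_2^H=\dim_\CC C$. The cohomology sequence identifies $C_j$ with $\ker\bigl(H^1(\mathcal{E}(j-1))\xrightarrow{\,z\,}H^1(\mathcal{E}(j))\bigr)$. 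Only the classes killed by $z$ obstruct lifting, so you get $\dim C\le\sum_k h^1(\mathcal{E}(k))$, and the inequality can be strict. In the example, $\dim C=1$ for $H=H_1$, while $\sum_k h^1(\mathcal{E}(k))\ge3$ by the same identity applied to $H_5$.

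Your equality case reads $H^1_*(\mathcal{E})=0$ off the false formula, so it needs an extra step, which is easy. First, $\dim C\ge0$ gives the inequality. If $b=e_1^He_2^H$, then $z$ acts injectively on the finite-length module $H^1_*(\mathcal{E})$. Descending induction from $k\gg0$ then gives $H^1_*(\mathcal{E})=0$, and Horrocks applies as in your Step 3. Alternatively, $C=0$ says that $D_0/zD_0\cong D(\cA^H,\mu^H)$ is free over $S/zS$. Since $z$ is a nonzerodivisor on $D_0$, graded Nakayama gives freeness of $D_0$ directly, without Horrocks.

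Two smaller points. First, your degree argument already proves $\mathcal{E}|_H\cong\widetilde{D(\cA^H,\mu^H)}$. The map is injective because $D_0/zD_0$ embeds in $D(\cA^H,\mu^H)$, and an injective map of rank two bundles on $\PP^1$ of equal degree $-(d-1)$ is an isomorphism. So the local computation you flag as the main obstacle is not needed. Second, in Step 1 the condition $\theta(z)\in zS$ holds for every $\theta\in D(\cA)$. Ziegler's map should be defined on $\{\theta\in D(\cA):\theta(z)=0\}\cong D(\cA)/S\theta_E$.
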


\begin{lemma}\label{lem:freecrit}
Let $L(\cA)$ be of type $(d;d_1,d_2)$ and let $H\in\cA$. Then $\cA$ is free if and only if $e_1^H\ge d_1$, i.e.\ if and only if $D(\cA^H,\mu^H)$ has no nonzero element of degree $d_1-1$.
\end{lemma}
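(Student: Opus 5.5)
The plan is to reduce everything to Yoshinaga's criterion (Theorem~\ref{thm:yoshinaga}) combined with an elementary maximization of the product $e_1^He_2^H$ under a fixed sum. By hypothesis $\chi_0(\cA,t)=(t-d_1)(t-d_2)$, so $b=d_1d_2$ and $d_1+d_2=d-1$. Both facts depend only on $L(\cA)$ and hold whether or not $\cA$ is free. Likewise, the Ziegler exponents satisfy $e_1^H+e_2^H=d-1$ with $e_1^H\le e_2^H$. Thus $(d_1,d_2)$ and $(e_1^H,e_2^H)$ are two splittings of the same integer $d-1$ into ordered pairs $x\le y$.

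The key step uses the fact that, on such pairs with fixed sum $s=d-1$, the function $x\mapsto x(s-x)$ is strictly increasing for $0\le x\le s/2$. Since both $d_1$ and $e_1^H$ lie in $[0,s/2]$, this gives $e_1^He_2^H\ge d_1d_2$ if and only if $e_1^H\ge d_1$, and $e_1^He_2^H=d_1d_2$ if and only if $e_1^H=d_1$.

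Theorem~\ref{thm:yoshinaga} provides $b\ge e_1^He_2^H$, that is, $e_1^He_2^H\le d_1d_2$, which by monotonicity means $e_1^H\le d_1$. Freeness is equivalent to $b=e_1^He_2^H$, hence to $e_1^H=d_1$. Combined with the automatic inequality $e_1^H\le d_1$, this is equivalent to $e_1^H\ge d_1$, which proves the first formulation.

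For the second formulation, I recall from \S\ref{subsec:ziegler} that $e_1^H$ is the minimal degree of a nonzero element of $D(\cA^H,\mu^H)$. Since this module is free with basis in degrees $e_1^H\le e_2^H$, it has nonzero elements in every degree $\ge e_1^H$ and none below. Therefore $e_1^H\ge d_1$ holds exactly when there is no nonzero element of degree $\le d_1-1$. Because nonzero elements exist in every degree $\ge e_1^H$, this is in turn equivalent to there being no nonzero element of degree exactly $d_1-1$.

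I do not expect a real obstacle here. The only point needing care is to use that $b=d_1d_2$ is a combinatorial identity, valid for every arrangement with lattice $L(\cA)$ and not only for free ones, and to treat the monotonicity argument with the ordering $e_1^H\le e_2^H$ and $d_1\le d_2$.
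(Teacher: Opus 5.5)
Your proof is correct and follows the paper's argument: Yoshinaga's criterion combined with the monotonicity of $x\mapsto x(d-1-x)$ for $x\le(d-1)/2$, using that $b=d_1d_2$ depends only on the lattice. You also make explicit two points the paper leaves implicit: the automatic inequality $e_1^H\le d_1$, and the equivalence with the absence of nonzero elements of degree $d_1-1$, obtained by multiplying the basis element of degree $e_1^H$ by monomials.
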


\begin{proof}
If $\cA$ is free, then $e_1^He_2^H=d_1d_2$ and $e_1^H+e_2^H=d_1+d_2$, so $e_1^H=d_1$. Conversely, the function $x\mapsto x(d-1-x)$ is increasing for $x\le(d-1)/2$, so $d_1\le e_1^H\le(d-1)/2$ implies $e_1^He_2^H\ge d_1d_2=b$. Theorem~\ref{thm:yoshinaga} then gives equality, hence freeness.
\end{proof}

Concretely, let $H\cong\PP^1$ have homogeneous coordinates $(X:Y)$ and write $P=(s_P:t_P)$. A derivation $\theta=g_1\partial_X+g_2\partial_Y$, with $g_1,g_2$ homogeneous of degree $k$, lies in $D(\cA^H,\mu^H)$ if and only if $(t_PX-s_PY)^{\mu^H(P)}$ divides $h_P:=t_Pg_1-s_Pg_2$ for every $P\in H$. If $t_P\neq0$, this divisibility is equivalent to the vanishing at $(s_P,t_P)$ of the Hasse derivatives $\partial_X^{(j)}h_P$, $0\le j<\mu^H(P)$; if $t_P=0$ one uses $\partial_Y^{(j)}$ instead. This is valid in every characteristic. We obtain a system of $d-1$ linear equations in the $2(k+1)$ coefficients of $g_1,g_2$, and we denote its matrix by $M_{H,k}$. If $X$-derivatives are used at every point, the resulting system is implied by the exact one, so its solution space \emph{contains} $D(\cA^H,\mu^H)_k$, with equality when all $t_P\ne0$.

\section{Reduction to three lattice types}\label{sec:reduction}

\begin{definition}
A lattice $L$ is a \emph{Terao counterexample} if there are complex line arrangements $\cA$, $\cB$ with $L(\cA)\cong L(\cB)\cong L$, $\cA$ free and $\cB$ not free.
\end{definition}

In this section $L$ is a Terao counterexample with $d$ lines, realized by $\cA$ (free) and $\cB$ (not free), and $(d_1,d_2)$, $d_1\le d_2$, are the exponents of $\cA$. We put
\[
X=\sum_P\binom{m_P-1}{2},\qquad W=\sum_P\binom{m_P-2}{2}=t_4+3t_5+6t_6+\cdots.
\]

\begin{lemma}[counting]\label{lem:counting}
For every lattice of type $(d;d_1,d_2)$:
\begin{enumerate}[label=\textup{(\alph*)}]
\item $\sum_P\binom{m_P}{2}=\binom d2$ and $\sum_P(m_P-1)=d_1d_2+d-1$;
\item $X=\binom{d-1}{2}-d_1d_2$ and $\tau=(d-1)^2-d_1d_2=\tau_{\max}(d,d_1)$;
\item $\sum_He(H)=3X-W$; in particular, with $\beta:=3X-d\,d_2$,
\begin{equation}\label{eq:budget}
W+\sum_H\bigl(e(H)-d_2\bigr)=\beta\qquad\text{(budget identity).}
\end{equation}
\end{enumerate}
Conversely, for every simple rank $3$ matroid on $d$ elements, $X=\binom d2-\sum_P(m_P-1)=\binom d2-(b+d-1)$ by \eqref{eq:chi}. So $X$ determines $\chi_0$, and every lattice with $X=\binom{d-1}2-d_1d_2$ is of type $(d;d_1,d_2)$.
\end{lemma}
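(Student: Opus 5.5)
The plan is to prove everything by double counting incidences and then substituting the type relations $d_1+d_2=d-1$ and $d_1d_2=b$ from \eqref{eq:chi}.

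For (a), every pair of distinct lines meets in exactly one intersection point, and each point $P$ accounts for $\binom{m_P}{2}$ such pairs. Summing over $P$ gives $\sum_P\binom{m_P}{2}=\binom d2$. The second formula is just the definition of $b$ in \eqref{eq:chi}, rearranged as $\sum_P(m_P-1)=b+d-1$, with $b=d_1d_2$ because $\chi_0=(t-d_1)(t-d_2)$.

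For (b), I use $\binom{m-1}{2}=\binom m2-(m-1)$. Together with (a) this gives
\[
X=\binom d2-(d_1d_2+d-1)=\binom{d-1}2-d_1d_2 .
\]
Next, $(m-1)^2=2\binom{m-1}{2}+(m-1)$, so $\tau=2X+d_1d_2+d-1$. Substituting $X$ yields $\tau=(d-1)(d-2)+d-1-d_1d_2=(d-1)^2-d_1d_2$. It remains to match this with $\tau_{\max}(d,d_1)=(d-1)^2-d_1(d-1-d_1)$, which holds since $d-1-d_1=d_2$.

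For (c), I swap the order of summation in $\sum_He(H)$. Each point $P$ lies on $m_P$ lines, so $\sum_He(H)=\sum_P m_P(m_P-2)$. The key step is the polynomial identity
\[
m(m-2)=3\binom{m-1}{2}-\binom{m-2}{2}.
\]
To check it, write $\binom{m-2}{2}=\binom{m-1}{2}-(m-2)$; the right side becomes $2\binom{m-1}2+(m-2)=(m-1)(m-2)+(m-2)=m(m-2)$. Summing over $P$ gives $\sum_He(H)=3X-W$. Subtracting $d\,d_2$ from both sides, and noting that there are $d$ lines so $\sum_H d_2=d\,d_2$, gives the budget identity \eqref{eq:budget}.

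For the converse, the computation in (b) never used the type: $X=\binom d2-\sum_P(m_P-1)=\binom d2-(b+d-1)$ holds for any simple rank $3$ matroid. Hence $X$ determines $b$, and therefore $\chi_0=t^2-(d-1)t+b$. If $X=\binom{d-1}2-d_1d_2$, then $b=d_1d_2$, and since $d_1+d_2=d-1$ we get $\chi_0=(t-d_1)(t-d_2)$, which is exactly the statement that the lattice has type $(d;d_1,d_2)$.

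There is no real obstacle here. The only steps that need care are the two binomial identities and using $d_2=d-1-d_1$ at the right moment.
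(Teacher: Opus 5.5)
Your proposal is correct and follows the paper's proof essentially step for step: you count pairs of lines for (a), use $\binom{m-1}{2}=\binom m2-(m-1)$ and $(m-1)^2=2\binom{m-1}2+(m-1)$ for (b), and exchange the order of summation together with $m(m-2)=3\binom{m-1}2-\binom{m-2}2$ for (c). You also spell out the verification of that binomial identity and the equality with $\tau_{\max}(d,d_1)$ via $d_2=d-1-d_1$, which the paper leaves implicit.
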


\begin{proof}
(a) The first identity counts pairs of lines; the second is \eqref{eq:chi} with $b=d_1d_2$. (b) $X=\sum_P\binom{m_P}{2}-\sum_P(m_P-1)$. For $\tau$, note that $(m-1)^2=2\binom{m-1}{2}+(m-1)$, so $\tau=2X+d_1d_2+d-1=(d-1)^2-d_1d_2$. (c) $\sum_He(H)=\sum_Pm_P(m_P-2)$, and $m(m-2)=3\binom{m-1}{2}-\binom{m-2}{2}$ for all $m$.
\end{proof}

\begin{lemma}\label{lem:dimca}
$m\le d_1-1$.
\end{lemma}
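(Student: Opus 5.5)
Throughout, $L$ is a Terao counterexample realized by a free arrangement $\cA$ and a non-free arrangement $\cB$, and $(d_1,d_2)$ are the exponents of $\cA$. To prove that $m\le d_1-1$ we argue by contradiction and assume $m\ge d_1$. The idea is that, for a line arrangement with a point of multiplicity $m$, freeness is already decided by combinatorial data whenever $m$ is large relative to the exponents. We treat two ranges of $m$ separately.

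\emph{Large range, $m\ge d/2$ (equivalently $2m\ge d$).} Here the cited result of Dimca \cite{Dim17a} applies. Project from a point $P$ of multiplicity $m$: the lines not through $P$ yield a derivation of degree $d-m$, which gives $\mdr(f)\le d-m$. Dimca shows that in this range $\mdr(f)$ equals either $m$ or $d-m$, according to whether $\cB$ has a second point of high multiplicity or a similar lattice condition. Either way $\mdr(f)$ depends only on $L$. Freeness is then equivalent to $\tau=\tau_{\max}(d,\mdr(f))$, by Theorem~\ref{thm:dPW} or by its analogue at $\mdr=d/2$. By Lemma~\ref{lem:counting}(b), $\tau$ is combinatorial. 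Hence $\cA$ and $\cB$ are either both free or both non-free, which contradicts the choice of $L$.

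\emph{Intermediate range, $d_1\le m<d/2$.} Here I would invoke Dimca--Kühne--Pokora \cite{DKP26}. Their result says that if a point of multiplicity $m\ge \mdr$ exists (or $m\ge d_1$ for the combinatorial type), then freeness is combinatorial: the arrangement is free if and only if the number of lines through $P$, together with the type, forces $\mdr(f)=d_1$. Concretely, the derivation constructed from $P$ has degree $d-m$. A free $\cA$ with $m\ge d_1$ then forces the relation $d_1\in\{m,d-1-m\}$ or a similar one. Applying the same theorem to $\cB$, whose lattice and hence whose $m$ and $\tau$ agree with those of $\cA$, gives freeness of $\cB$, a contradiction.

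The main obstacle is stating precisely which criterion from \cite{Dim17a,DKP26} covers the entire range $m\ge d_1$. I expect the cleanest formulation to be the following: if a line arrangement has a point of multiplicity $m\ge d_1$, where $d_1$ is the smaller root of $\chi_0$, then $\cA$ is free if and only if a lattice-determined condition holds. I would quote this as a black box and check the boundary cases $2m=d-1$ and $2m=d$ by hand using Theorem~\ref{thm:dPW}.
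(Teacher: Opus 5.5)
\textbf{Gap: the intermediate range $d_1\le m<d/2$.} There your argument does not go through. The pencil derivation at $P$ only gives the upper bound $\mdr(f)\le d-m$, and that bound is useless in this range. The missing idea is the \emph{lower} bound in Dimca's dichotomy \cite[Thm.~1.2]{Dim17a}: either $\mdr(f)=d-m$, or $m-1\le\mdr(f)\le d-m-1$.

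Since $d_1\le(d-1)/2<d-m$ when $2m\le d$, the free arrangement $\cA$ falls in the second alternative. So $m-1\le d_1$, and your assumption $d_1\le m$ pins down $d_1\in\{m-1,m\}$, not $\{m,d-1-m\}$.

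You then need the correct form of \cite[Props.~2.1--2.2]{DKP26}: an arrangement with maximal multiplicity $m$ and $\tau=\tau_{\max}(d,m-1)$, resp.\ $\tau=\tau_{\max}(d,m)$, is free. As $m(\cB)=m$ and $\tau(\cB)=\tau(\cA)=\tau_{\max}(d,d_1)$ by Lemma~\ref{lem:counting}(b), $\cB$ is free, which is the desired contradiction. Your paraphrase (``free iff the type forces $\mdr(f)=d_1$'') is not what these results say. Used that way it would be circular, since $\mdr(f_\cB)$ is not known to be a lattice invariant.

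\textbf{The large range.} This is the paper's case $d_1=d-m$; for free $\cA$ that case is equivalent to $2m>d$. It also needs the dichotomy as stated above, not ``$\mdr$ equals $m$ or $d-m$''. For $2m>d$ the second alternative is empty, so $\mdr(f_\cA)=\mdr(f_\cB)=d-m<d/2$, and Theorem~\ref{thm:dPW} applies as you say.

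At $2m=d$ the dichotomy allows $\mdr(f_\cB)\in\{m-1,m\}$, so your claim that $\mdr$ depends only on $L$ is unjustified there. However, $d_1=m-1$ is forced in that case, and it is absorbed by \cite[Prop.~2.1]{DKP26}. So no ``analogue at $\mdr=d/2$'' and no separate boundary checks (at $2m=d$ or $2m=d-1$) are needed.

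\textbf{The black box.} The criterion you propose to quote in your last paragraph is exactly \cite[Cor.~1.4]{Dim17a}, i.e.\ the lemma itself, and the paper cites it as such. Quoting it with that reference would be a complete proof and makes the case analysis superfluous. Quoting an unidentified criterion whose precise statement you leave open is not a proof.
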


\begin{proof}
This is the first assertion of \cite[Cor.~1.4]{Dim17a}: Terao's conjecture holds for a free arrangement $\cA$ with $m\ge d_1$. For completeness we include a short argument; in the case $d_1\in\{m-1,m\}$ it uses \cite[Props.~2.1--2.2]{DKP26} instead of the original argument of \cite{Dim17a}. We have $\mdr(f_\cA)=d_1$. By Dimca's theorem \cite[Thm.~1.2]{Dim17a}, applied to a point of maximal multiplicity $m$, either $d_1=d-m$ or $m-1\le d_1\le d-m-1$.

\emph{Case $d_1=d-m$.} This case is covered by \cite[Cor.~1.4(1)]{Dim17a}; we give the argument. Then $m=d-d_1\ge d_2+1>d_1$. Apply \cite[Thm.~1.2]{Dim17a} to $\cB$, which has the same $m$. Either $\mdr(f_\cB)=d-m=d_1$, or $m-1\le\mdr(f_\cB)\le d-m-1=d_1-1$. The second alternative would give $m\le d_1$, which is impossible. Hence $\mdr(f_\cB)=d_1\le(d-1)/2<d/2$, and $\tau(\cB)=\tau(\cA)=\tau_{\max}(d,d_1)$ by Lemma~\ref{lem:counting}(b). By Theorem~\ref{thm:dPW}, $\cB$ is free, a contradiction.

\emph{Case $d_1\in\{m-1,m\}$.} By \cite[Prop.~2.1]{DKP26}, a line arrangement in $\PP^2(\CC)$ with $d$ lines, maximal multiplicity $m$ and $\tau=\tau_{\max}(d,m-1)$ is free, with exponents $(m-1,d-m)$. By \cite[Prop.~2.2]{DKP26}, if instead $\tau=\tau_{\max}(d,m)$, it is free with exponents $(m,d-m-1)$. Now $m(\cB)=m$, and $\tau(\cB)=\tau(\cA)=\tau_{\max}(d,d_1)$ by Lemma~\ref{lem:counting}(b), which is $\tau_{\max}(d,m-1)$ or $\tau_{\max}(d,m)$. Hence $\cB$ is free, a contradiction.

Hence $d_1\ge m+1$.
\end{proof}

\begin{lemma}\label{lem:abe}
$r_H\le d_1$, i.e.\ $e(H)\ge d_2$, for every line $H$.
\end{lemma}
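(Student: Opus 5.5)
Throughout, $L$ is a Terao counterexample realized by a free $\cA$ with exponents $(d_1,d_2)$ and a non-free $\cB$. We argue by contradiction and suppose some line $H$ has $r_H\ge d_1+1$. This line exists in both realizations, since $r_H$ depends only on $L$. The two inputs are Abe's result on the number of points of a line in a free arrangement \cite{Abe14} and Abe's division theorem \cite{Abe16}. The equivalence of the two formulations is just \eqref{eq:eH}: $e(H)=d-1-r_H$ and $d-1=d_1+d_2$, so $r_H\le d_1$ is the same as $e(H)\ge d_2$.

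\emph{Step 1: pin down $r_H$ in $\cA$.} For a free arrangement in $\PP^2$ with exponents $(1,d_1,d_2)$, Abe's result \cite{Abe14} says that every line $H$ satisfies $r_H\le d_1+1$ or $r_H=d_2+1$. So if $r_H>d_1$, then $r_H\in\{d_1+1,d_2+1\}$. It remains to show that both values force $\chi(\cA^H,t)=(t-1)(t-(r_H-1))$ to divide $\chi(\cA,t)$.

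\emph{Step 2: divisibility.} If $r_H=d_1+1$ or $r_H=d_2+1$, then $r_H-1\in\{d_1,d_2\}$ is a root of $\chi_0(\cA,t)=(t-d_1)(t-d_2)$. Hence $\chi(\cA^H,t)$ divides $\chi(\cA,t)$. This condition is purely combinatorial, so it holds for $\cB$ as well. Abe's division theorem \cite{Abe16} says, in rank $3$, that an arrangement is free whenever $\chi(\cA^H,t)\mid\chi(\cA,t)$ for some $H$. This uses only that the restriction $\cA^H$ is free, which is automatic in rank $2$. So $\cB$ is free, which contradicts the assumption. Therefore $r_H\le d_1$ for every $H$.

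\emph{Main obstacle.} The delicate point is quoting Abe's result \cite{Abe14} in exactly the needed form. What is needed is the dichotomy that $|\cA^H|$ is at most $d_1+1$ or equal to $d_2+1$ for free $\cA$. Abe proves it by comparing the Ziegler exponents $(e_1^H,e_2^H)=(d_1,d_2)$ with the restriction: when $r_H$ is large, $D(\cA^H,\mu^H)$ contains elements of degree at most $d-1-r_H$ coming from products of linear forms. If the citation is only available in a weaker form, I would reprove the needed case directly along these lines. Let $k=d-1-r_H=\sum_{P\in H}(\mu^H(P)-1)$. Then $\prod_{P\in H}\alpha_P^{\mu^H(P)-1}$ times the Euler-type derivation of the simple restriction produces elements of low degree. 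This gives $e_1^H\le k$ once $r_H\ge 3$, together with the corresponding constraint on $e_2^H$. If $d_2<r_H-1$ with $r_H>d_1+1$, then $k<d_1$, so $e_1^H<d_1$, contradicting Lemma~\ref{lem:freecrit} for the free $\cA$.
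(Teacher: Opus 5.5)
Your proof is correct and is the paper's argument. You first use Abe's dichotomy ($r_H\le d_1+1$ or $r_H=d_2+1$) for the free $\cA$; the paper derives this from the affine statement \cite[Cor.~1.2(1)]{Abe14} by deconing with respect to a second line $H_0$, so that $H\setminus H_0$ carries $r_H-1$ points. You then apply Abe's division theorem to $\cB$ in the two boundary cases $r_H\in\{d_1+1,d_2+1\}$, exactly as the paper does. Your fallback sketch is not needed, and as written it could not replace the citation: $\prod_P\alpha_P^{\mu^H(P)-1}\theta_E$ has degree $k+1=d-r_H$, not $k$ (a line through only double points has $k=0$ but $e_1^H=1$), and this bound yields nothing in the range $d_1+2\le r_H\le d_2$.
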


\begin{proof}
Fix $H$, and choose $H_0\in\cA$ with $H_0\ne H$. The deconing of $\cA$ with respect to $H_0$ is the affine arrangement $\mathbf{d}\cA$ of the $d-1$ lines $H'\setminus H_0$, $H'\ne H_0$, in $\PP^2\setminus H_0\cong\CC^2$. Its cone is $\cA$, so it is free in the sense of \cite{Abe14}, and $\chi(\mathbf{d}\cA,t)=(t-d_1)(t-d_2)$. The affine line $H\setminus H_0$ contains exactly the intersection points of $\cA$ on $H$ other than $H\cap H_0$, i.e.\ $r_H-1$ of them. By \cite[Cor.~1.2(1)]{Abe14}, $r_H-1\le d_1$ or $r_H-1=d_2$.

Suppose that $r_H\in\{d_1+1,d_2+1\}$. Then $\chi(\cB^H,t)=(t-1)(t-r_H+1)$ divides $\chi(\cB,t)=(t-1)(t-d_1)(t-d_2)$, and $\cB^H$ is free since it has rank $2$. By Abe's division theorem \cite[Thm.~1.1]{Abe16}, $\cB$ is free, a contradiction. Hence $r_H\le d_1$. The reformulation in terms of $e(H)$ is \eqref{eq:eH}.
\end{proof}

\begin{lemma}\label{lem:wakamiko}
$r_H\ge4$, i.e.\ $e(H)\le d-5$, for every line $H$.
\end{lemma}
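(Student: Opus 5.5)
The plan is to argue by contradiction: if some line $H$ has $r_H\le3$, then freeness of any realization is decided by the lattice alone, which contradicts $L$ being a Terao counterexample. The tool is Yoshinaga's criterion (Theorem~\ref{thm:yoshinaga}), in the form of Lemma~\ref{lem:freecrit}. That criterion reduces freeness to the exponents $(e_1^H,e_2^H)$ of the Ziegler multirestriction $(\cA^H,\mu^H)$ on $H\cong\PP^1$. So it suffices to show that when $r_H\le3$ these exponents depend only on the multiplicities $\mu^H(P)=m_P-1$ of the points on $H$, and not on the positions of those points on $H$.

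First I fix a line $H$ with $r_H\le3$; the equivalence $r_H\ge4\iff e(H)\le d-5$ is just \eqref{eq:eH}.

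\begin{itemize}
\item Since the lines are not all concurrent, $r_H\ge2$.
\item Since $\mathrm{PGL}_2$ acts $3$-transitively on $\PP^1$, there is a projective transformation of $H$ sending the $r_H$ points of $\cA^H$ to the corresponding points of $\cB^H$. I order both sets compatibly with the lattice isomorphism, so that the multiplicities $\mu^H$ match. For $\cB$ one uses the line $H'$ corresponding to $H$, which also has $r_{H'}=r_H\le3$.
\item Such a transformation is induced by a linear automorphism of $\CC^2$. That automorphism maps $D(\cA^H,\mu^H)$ isomorphically onto $D(\cB^{H'},\mu^{H'})$ as graded modules, because the defining conditions transform equivariantly. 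Hence $(e_1^H,e_2^H)=(e_1^{H'},e_2^{H'})$.
\end{itemize}

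Next I apply Lemma~\ref{lem:freecrit}, which is legitimate because $L$ is of type $(d;d_1,d_2)$. Since $\cA$ is free, $e_1^H\ge d_1$. Then $e_1^{H'}=e_1^H\ge d_1$, so $\cB$ is free. This contradicts the choice of $\cB$, so $r_H\ge4$ for every line.

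The only step needing care is the equivariance of the multiarrangement module under $\mathrm{GL}_2$. I would check it directly from the description at the end of Section~\ref{subsec:ziegler}: the divisibility conditions $(t_PX-s_PY)^{\mu(P)}\mid t_Pg_1-s_Pg_2$ say that $\theta(\alpha_P)\in\alpha_P^{\mu(P)}S$. This condition is intrinsic, so it is preserved under a linear change of coordinates, which sends $\alpha_P$ to a scalar multiple of $\alpha_{g(P)}$ and $\theta$ to its pushforward, preserving degrees. Alternatively, one can cite Wakamiko's formula \cite{Wak07}, which gives the exponents of a rank $2$ multiarrangement with at most three points explicitly in terms of the multiplicities.
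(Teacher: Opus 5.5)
Your proof is correct and is essentially the paper's argument. The $3$-transitivity of $\mathrm{PGL}_2$ makes the Ziegler multirestrictions of $\cA$ and $\cB$ on corresponding lines isomorphic, so $e_1^H$ agrees, and Lemma~\ref{lem:freecrit} transfers freeness from $\cA$ to $\cB$; your remark that $r_H\ge2$ is harmless but not needed.
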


\begin{proof}
Suppose that $r_H\le3$. The Ziegler multirestrictions of $\cA$ and $\cB$ on $H$ consist of at most three points with the same multiplicities. Since $\mathrm{PGL}_2(\CC)$ acts $3$-transitively on $\PP^1$, they are isomorphic multiarrangements, so $e_1^H$ is the same for $\cA$ and $\cB$. (For three points the exponents are given explicitly by Wakamiko's formula \cite{Wak07}.) Since $\cA$ is free, $e_1^H=d_1$ by Lemma~\ref{lem:freecrit}, and then $\cB$ is free by the same lemma, a contradiction.
\end{proof}

\begin{remark}\label{rem:abe28}
All arrangements considered here are \emph{balanced}: every multiplicity $\mu^H(P)=m_P-1$ is at most half of $d-1$, because $m\le d_1-1\le (d-3)/2$. For balanced arrangements, \cite[Thm.~2.8]{Abe14} gives $e_2^H-e_1^H\le r_H-2$, and freeness is combinatorial as soon as $d_2-d_1\in\{r_H-2,r_H-3\}$ for some line $H$. This would sharpen Lemma~\ref{lem:wakamiko} to $r_H\ge d_2-d_1+4$, which we do not need. For the lattices of Theorem~\ref{thm:classification} we have $d_2-d_1\in\{0,2\}$ and $r_H\ge6$, so this criterion does not apply to them.
\end{remark}

\begin{lemma}[Bojanowski's inequality {\cite{Boj03}}; see {\cite[Thm.~19]{Pok21}}, {\cite[Rem.~2.5]{Pok17}}]\label{lem:boj}
Let $\cA$ be an arrangement of $d\ge6$ lines in $\PP^2(\CC)$ with $t_r=0$ for $r>2d/3$. Then
\[
t_2+\tfrac34t_3\ \ge\ d+\sum_{r\ge5}\Bigl(\frac{r^2}4-r\Bigr)t_r.
\]
If $L(\cA)$ is of type $(d;d_1,d_2)$, this inequality is equivalent to
\begin{equation}\label{eq:wmin}
3W\ \ge\ 5X-4d_1d_2+4.
\end{equation}
\end{lemma}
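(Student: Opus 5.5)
The inequality itself is Bojanowski's, so we do not reprove it. We take it as quoted from \cite{Boj03} and \cite[Thm.~19]{Pok21}. What remains is to check that its hypothesis holds in the situation where we apply it, and to prove the claimed equivalence with \eqref{eq:wmin}. The hypothesis is immediate here: by Lemma~\ref{lem:dimca} we have $m\le d_1-1\le (d-3)/2<2d/3$, so $t_r=0$ for $r>2d/3$. The hypothesis $d\ge6$ is trivial for $d\in\{15,16\}$.

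For the equivalence, I would first rewrite Bojanowski's inequality as a single sum over all intersection points. The coefficient $r^2/4-r$ equals $-1$ for $r=2$, $-3/4$ for $r=3$ and $0$ for $r=4$. Hence the inequality reads
\[
\sum_P\Bigl(\frac{m_P^2}{4}-m_P\Bigr)+d\le0,
\qquad\text{i.e.}\qquad
\sum_P\bigl(m_P^2-4m_P\bigr)+4d\le0 .
\]

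The key step is to express $m^2-4m$ in the basis $\binom{m-2}{2}$, $\binom{m-1}{2}$, $m-1$. These are exactly the summands of the lattice invariants $W$, $X$ and $\sum_P(m_P-1)$. Comparing the coefficients of $m^2$, $m$ and the constant term gives a $3\times3$ linear system. Its solution yields the polynomial identity
\[
m^2-4m=-3\binom{m-2}{2}+5\binom{m-1}{2}-4(m-1),
\]
which I would verify directly. Summing over all points $P$ and using Lemma~\ref{lem:counting}(a), namely $\sum_P(m_P-1)=d_1d_2+d-1$, turns the inequality into
\[
-3W+5X-4(d_1d_2+d-1)+4d\le0 .
\]
This is exactly $3W\ge5X-4d_1d_2+4$.

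Every step is reversible, so this gives the equivalence. The only real care needed is in choosing the basis so that no stray count of points (such as $t_2$ or the total number of points) survives. Solving the small linear system for the coefficients settles this, so I expect no genuine obstacle.
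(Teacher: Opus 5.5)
Your proposal is correct and follows the paper's argument. Your identity $m^2-4m=-3\binom{m-2}{2}+5\binom{m-1}{2}-4(m-1)$ is four times the paper's identity $\frac54\binom{k-1}{2}-(k-1)-(\frac{k^2}{4}-k)=\frac34\binom{k-2}{2}$, and both proofs sum it over all points and use $\sum_P(m_P-1)=d_1d_2+d-1$. Your check that $m<2d/3$ is correct, though it belongs to the application in Proposition~\ref{prop:reduction} rather than to the lemma itself.
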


Bojanowski's inequality follows from an inequality of Langer for line arrangements, \cite[Prop.~11.3.1]{Lan03}, which Langer derives from his orbifold Miyaoka--Yau inequality.

\begin{proof}[Proof of the equivalence with \eqref{eq:wmin}]
For every integer $k$,
\[
\frac54\binom{k-1}{2}-(k-1)-\Bigl(\frac{k^2}4-k\Bigr)=\frac34\binom{k-2}{2},
\]
as both sides equal $\frac38k^2-\frac{15}8k+\frac94$. Since $k^2/4-k$ equals $-1$, $-\frac34$ and $0$ for $k=2,3,4$, summing over all points and using Lemma~\ref{lem:counting}(a) gives
\[
\frac34W=\frac54X-(d_1d_2+d-1)+\Bigl(t_2+\frac34t_3-\sum_{r\ge5}\Bigl(\frac{r^2}4-r\Bigr)t_r\Bigr).
\]
Hence Bojanowski's inequality is equivalent to $\frac34W\ge\frac54X-d_1d_2+1$.
\end{proof}

\begin{proposition}\label{prop:reduction}
Let $L$ be a Terao counterexample with $d\in\{15,16\}$ lines, of type $(d;d_1,d_2)$. Then
\[
(d;d_1,d_2)\in\{(15;6,8),\ (15;7,7),\ (16;7,8)\},
\]
and $L$ satisfies:
\begin{enumerate}[label=\textup{(R\arabic*)},leftmargin=3.2em]
\item $3\le m_P\le d_1-1$ for every point with $m_P\ge3$;
\item $d_2\le e(H)\le d-5$ for every line $H$;
\item $X=\binom{d-1}{2}-d_1d_2$;
\item $W\ge W_{\min}:=\bigl\lceil(5X-4d_1d_2+4)/3\bigr\rceil$;
\item $W+\sum_H(e(H)-d_2)=\beta=3X-d\,d_2$.
\end{enumerate}
\end{proposition}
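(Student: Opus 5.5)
The conditions (R1)--(R5) come directly from the lemmas of this section. After that, the list of types follows from comparing the lower bound (R4) on $W$ with the upper bound that the budget identity imposes.

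\emph{Step 1: the conditions.} I would fix a Terao counterexample $L$ with $d\in\{15,16\}$ lines, realized by a free $\cA$ and a non-free $\cB$. Since $\cA$ is free, $L$ is of type $(d;d_1,d_2)$ with $d_1+d_2=d-1$ and $d_1\le d_2$. The individual conditions come from these results:
\begin{itemize}
\item (R1) is Lemma~\ref{lem:dimca}.
\item (R2) is Lemmas~\ref{lem:abe} and~\ref{lem:wakamiko} together, via \eqref{eq:eH}.
\item (R3) is Lemma~\ref{lem:counting}(b).
\item (R5) is the budget identity \eqref{eq:budget}.
\item (R4) is Lemma~\ref{lem:boj} applied to $\cA$. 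Its hypotheses hold because $d\ge6$ and $m\le d_1-1\le(d-3)/2<2d/3$. Since $W$ is an integer, the bound \eqref{eq:wmin} may be rounded up to $W_{\min}$.
\end{itemize}

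\emph{Step 2: a lower bound on $d_1$.} Suppose there were no point with $m_P\ge3$. Then $e(H)=0<d_2$ for every line, which contradicts (R2). Hence $3\le m\le d_1-1$, so $d_1\ge4$. Since $d_1\le(d-1)/2$, the remaining candidate types are
\begin{itemize}
\item $(15;d_1,14-d_1)$ with $4\le d_1\le7$;
\item $(16;d_1,15-d_1)$ with $4\le d_1\le7$.
\end{itemize}
All of them satisfy $d_2\le d-5$, so (R2) alone does not exclude any of them.

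\emph{Step 3: excluding five types.} By (R2) every summand $e(H)-d_2$ in (R5) is nonnegative, so (R5) gives $W\le\beta$. Combined with (R4), this forces
\[
W_{\min}\le\beta.
\]
I would compute $X$, $\beta=3X-d\,d_2$ and $W_{\min}$ for each of the five unwanted types:

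\begin{center}
\begin{tabular}{cccc}
\toprule
type & $X$ & $\beta$ & $W_{\min}$ \\
\midrule
$(15;4,10)$ & $51$ & $3$ & $33$ \\
$(15;5,9)$ & $46$ & $3$ & $18$ \\
$(16;4,11)$ & $61$ & $7$ & $45$ \\
$(16;5,10)$ & $55$ & $5$ & $27$ \\
$(16;6,9)$ & $51$ & $9$ & $15$ \\
\bottomrule
\end{tabular}
\end{center}

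In every case $W_{\min}>\beta$, a contradiction. For the three surviving types the inequality holds:
\begin{itemize}
\item $(15;6,8)$: $X=43$, $\beta=W_{\min}=9$.
\item $(15;7,7)$ and $(16;7,8)$: similar, with slack.
\end{itemize}

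\emph{Main obstacle.} The arithmetic is routine. The delicate point is Step 2: one must make sure that no degenerate case slips through. The generic case with no triple points is handled by (R2). One must also check that Bojanowski's hypothesis $t_r=0$ for $r>2d/3$ is genuinely implied by (R1), which it is, as noted in Step 1.
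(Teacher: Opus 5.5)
Your proof is correct and follows the paper's argument: you derive (R1)--(R5) from the lemmas of the section, obtain $d_1\ge4$ from the existence of a point of multiplicity at least $3$, and exclude the other five types by $W_{\min}>\beta$, with exactly the values of Table~\ref{tab:reduction}. The only difference is how you get a point with $m_P\ge3$: you use (R2), since all double points would give $e(H)=0<d_2$, while the paper uses $X=\binom{d-1}2-d_1d_2>0$; both arguments are valid.
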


\begin{proof}
(R1)--(R5) follow from Lemmas~\ref{lem:dimca}, \ref{lem:abe}, \ref{lem:wakamiko}, \ref{lem:counting} and \ref{lem:boj}. Lemma~\ref{lem:boj} applies because $d\ge 6$ and $m\le d_1-1\le(d-1)/2-1<2d/3$. Since $X=\binom{d-1}2-d_1d_2\ge\binom{d-1}2-\frac{(d-1)^2}4>0$, some point has $m_P\ge3$, so $m\ge3$ and Lemma~\ref{lem:dimca} gives $d_1\ge4$. By Lemma~\ref{lem:counting}(c), (R5) holds for every lattice satisfying (R3); we list it because it is used for pruning. By (R2) every summand in (R5) is nonnegative, so $W_{\min}\le W\le\beta$. Table~\ref{tab:reduction} lists all types with $4\le d_1\le(d-1)/2$; all but the three stated ones have $W_{\min}>\beta$.
\end{proof}

\begin{table}[ht]
\centering
\begin{tabular}{cccccccc}
\toprule
$d$ & $(d_1,d_2)$ & $X$ & $\beta=3X-d\,d_2$ & $5X-4d_1d_2+4$ & $W_{\min}$ & $m\le$ & type\\
\midrule
15 & $(4,10)$ & 51 & 3 & 99 & 33 & 3 & excluded\\
15 & $(5,9)$ & 46 & 3 & 54 & 18 & 4 & excluded\\
15 & $(6,8)$ & 43 & 9 & 27 & 9 & 5 & remains\\
15 & $(7,7)$ & 42 & 21 & 18 & 6 & 6 & remains\\
16 & $(4,11)$ & 61 & 7 & 133 & 45 & 3 & excluded\\
16 & $(5,10)$ & 55 & 5 & 79 & 27 & 4 & excluded\\
16 & $(6,9)$ & 51 & 9 & 43 & 15 & 5 & excluded\\
16 & $(7,8)$ & 49 & 19 & 25 & 9 & 6 & remains\\
\bottomrule
\end{tabular}
\caption{The numerical reduction (Proposition~\ref{prop:reduction}).}
\label{tab:reduction}
\end{table}

\begin{remark}
Each line has two degrees of freedom, a projective frame fixes four lines, and a point of multiplicity $m_P$ imposes $m_P-2$ conditions. The expected dimension of the space of realizations modulo $\mathrm{PGL}_3$ is therefore
\[
2(d-4)-\sum_P(m_P-2)=2d-8-(X-W),
\]
using $m-2=\binom{m-1}{2}-\binom{m-2}{2}$. In the three remaining types it equals $-12$ (type $(15;6,8)$, where (R4) and (R5) force $W=9$), $W-20\le1$, and $W-25\le-6$, respectively. This is only a parameter count, not a dimension bound: the incidence conditions need not be independent. It suggests that realizable lattices may be rare, which is what we find.
\end{remark}

\begin{remark}
In type $(15;6,8)$ we have $W_{\min}=\beta=9$. By (R5) this forces $W=9$ and $e(H)=8$, i.e.\ $r_H=6$, for every line.
\end{remark}

\section{Enumeration of the residual lattices}\label{sec:enumeration}

For a type $(d;d_1,d_2)$ from Proposition~\ref{prop:reduction}, let $\cL(d;d_1,d_2)$ be the set of isomorphism classes of simple rank $3$ matroids on $d$ elements satisfying (R1)--(R5). By Proposition~\ref{prop:reduction} and Lemma~\ref{lem:counting}, every Terao counterexample with $15$ or $16$ lines lies in one of the three sets $\cL(15;6,8)$, $\cL(15;7,7)$, $\cL(16;7,8)$. In this section we enumerate these sets.

\subsection{Encoding}
A matroid is given by its family of blocks (Section~\ref{subsec:lattices}). Two matroids are isomorphic if and only if their bipartite point--block incidence graphs are isomorphic by an isomorphism preserving the bipartition. We use the canonical labelling of nauty \cite{MP14} (program \texttt{labelg}, with the points forming the first colour class) and store one canonical \texttt{graph6} string per class.

\subsection{Stage 1: heavy configurations}
For a matroid $L$, let $\mathcal H(L)$ be its set of blocks of size at least $4$, the \emph{heavy configuration} of $L$. Triple points contribute $0$ to $W$, so $W(L)=W(\mathcal H(L))$. For an element $v$ of the matroid (a line of the arrangement; in this section we call the elements \emph{points}, as in the theory of linear spaces) put
\[
h(v)=\sum_{B\in\mathcal H,\,v\in B}(|B|-2),\qquad X_{\mathcal H}=\sum_{B\in\mathcal H}\binom{|B|-1}{2}.
\]
Then $e(v)=h(v)+\tau_3(v)$, where $\tau_3(v)$ is the number of triple points through $v$, and $t_3=X-X_{\mathcal H}$. Put $h_{\max}=\min(d-5,\ d_2+\beta-W_{\min})$.

\begin{lemma}\label{lem:heavy}
If $L$ satisfies (R1)--(R5), then $\mathcal H=\mathcal H(L)$ satisfies:
\begin{enumerate}[label=\textup{(H\arabic*)},leftmargin=3.2em]
\item its blocks have sizes in $[4,d_1-1]$ and pairwise share at most one element;
\item $h(v)\le h_{\max}$ for all $v$, and $X_{\mathcal H}\le X$;
\item $W(\mathcal H)+\sum_v(h(v)-d_2)^+\le\beta$, and $\sum_v(h(v)-d_2)^+\le\beta-W_{\min}$;
\item $W(\mathcal H)\ge W_{\min}$;
\item for each $v$, let $u(v)$ be the number of $w\ne v$ not in a common block of $\mathcal H$ with $v$, and put $c(v)=\min(\lfloor u(v)/2\rfloor,\,h_{\max}-h(v))$. Then $(d_2-h(v))^+\le c(v)$ for every $v$, and
\[
\sum_v(d_2-h(v))^+\ \le\ 3(X-X_{\mathcal H})\ \le\ \sum_vc(v).
\]
\end{enumerate}
Conditions \textup{(H1)--(H3)} are inherited by every subset of $\mathcal H$.
\end{lemma}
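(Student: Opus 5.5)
The plan is to translate each of (R1)--(R5) into a statement about the heavy part, using the decomposition $e(v)=h(v)+\tau_3(v)$ with $\tau_3(v)\ge0$, together with $W(L)=W(\mathcal H)$ and $X=X_{\mathcal H}+t_3$. The last identity holds because a triple point contributes $\binom{2}{2}=1$ to $X$.

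(H1) is immediate from (R1) and the fact that the blocks of a linear space pairwise meet in at most one element. (H4) is (R4), since $W(L)=W(\mathcal H)$. For (H3), (R5) says $W+\sum_v(e(v)-d_2)=\beta$, and by (R2) every summand $e(v)-d_2$ is nonnegative. Since $e(v)\ge h(v)$, we get $e(v)-d_2\ge (h(v)-d_2)^+$, which gives the first inequality. Combining it with $W\ge W_{\min}$ gives the second. For (H2), $X_{\mathcal H}\le X$ because $t_3\ge0$. Also $h(v)\le e(v)\le d-5$ by (R2). Finally, $h(v)-d_2\le\sum_w(e(w)-d_2)=\beta-W\le\beta-W_{\min}$, using nonnegativity of the other summands, so $h(v)\le h_{\max}$.

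(H5) needs a little more. Fix $v$. Each triple point through $v$ contains two further elements $w$, and these share no heavy block with $v$. The reason is that two elements lie on a unique common rank-$2$ flat, which here is the triple point, not a heavy block. Triple points through $v$ are pairwise disjoint apart from $v$, so $2\tau_3(v)\le u(v)$. We also have $\tau_3(v)=e(v)-h(v)\le h_{\max}-h(v)$; here $e(v)\le h_{\max}$ holds by the same argument as for $h$, applied to $e$. Hence $\tau_3(v)\le c(v)$. On the other hand, $\tau_3(v)=e(v)-h(v)\ge d_2-h(v)$ by (R2), so $\tau_3(v)\ge (d_2-h(v))^+$. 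Summing over $v$ and using $\sum_v\tau_3(v)=3t_3=3(X-X_{\mathcal H})$ gives the displayed chain, and the pointwise inequality $(d_2-h(v))^+\le c(v)$ follows from the same two bounds on $\tau_3(v)$.

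For heredity, pass to a subset $\mathcal H'\subseteq\mathcal H$. Then $h$, $W$ and $X_{\mathcal H}$ can only decrease, and so can $(h-d_2)^+$. Hence (H1)--(H3) persist, whereas (H4) and (H5) need not. The only step needing care is (H5), specifically the bound $2\tau_3(v)\le u(v)$. I expect no real obstacle beyond checking that the elements counted by $u(v)$ exclude those sharing a heavy block with $v$, and that the two partners from each triple point are indeed distinct from one another and from the partners of other triple points through $v$.
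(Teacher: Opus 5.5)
Your proof is correct and follows the paper's argument essentially step for step. You derive (H1)--(H4) from (R1), (R2), (R4), (R5) via $e(v)=h(v)+\tau_3(v)\ge\max(h(v),d_2)$ and $X=X_{\mathcal H}+t_3$. You obtain (H5) by trapping $\tau_3(v)$ between $(d_2-h(v))^+$ and $c(v)$ and summing with $\sum_v\tau_3(v)=3t_3$. Along the way you make explicit two points the paper leaves implicit: why the partners of $v$ in its triple points are among the $u(v)$ elements and are pairwise distinct, and why $e(v)\le h_{\max}$.
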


\begin{proof}
(H1) is (R1). For (H2): $h(v)\le e(v)\le d-5$, by (R5) $e(v)-d_2\le\beta-W\le\beta-W_{\min}$, and $X-X_{\mathcal H}=t_3\ge0$. For (H3): $e(v)\ge\max(h(v),d_2)$, so $e(v)-d_2\ge(h(v)-d_2)^+$; now use (R5) and (R4). (H4) is (R4). For (H5): the triple points through $v$ use disjoint pairs among the $u(v)$ elements not yet collinear with $v$. Moreover $\tau_3(v)=e(v)-h(v)$ lies between $(d_2-h(v))^+$ and $h_{\max}-h(v)$, and $\sum_v\tau_3(v)=3t_3$. Heredity holds because $h$, $X_{\mathcal H}$ and $W$ can only decrease when blocks are removed.
\end{proof}

We generate all heavy configurations satisfying (H1)--(H3), one per isomorphism class, by McKay's canonical augmentation \cite{McK98}, adding one block at a time, and output those satisfying (H4) and (H5).
\begin{itemize}
\item The candidate blocks are the sets of size $4,\dots,d_1-1$ containing no pair already covered, and containing no $v$ with $h(v)+2>h_{\max}$.
\item The canonical deletion of a configuration is the block whose vertex comes last in the canonical labelling of the incidence graph. Blocks are coloured by size, after the points, so the last vertex is always a block.
\item A child is accepted if and only if the new block lies in the orbit of the canonical deletion under the automorphism group of the child. Isomorphic accepted siblings are identified by comparing canonical graphs.
\end{itemize}
Every child is tested again against (H1)--(H3), since a candidate block of size at least $5$ may overshoot $h_{\max}$. McKay's scheme \cite{McK98} applies because:
\begin{itemize}
\item (H1)--(H3) are hereditary, so deleting the canonical block of a valid configuration gives a valid configuration;
\item the candidate filter admits the canonical block of every valid child, since $|B|-2\ge2$;
\item the acceptance test is isomorphism invariant, because the ordered colour partition is fixed.
\end{itemize}
Hence this produces every isomorphism class of configurations satisfying (H1)--(H3), each at least once; the final deduplication with \texttt{labelg} removes any remaining repetition.

\subsection{Stage 2: completion by triple points}
Let $\mathcal H$ be a representative heavy configuration. A \emph{completion} is a set $T$ of $3$-subsets such that $\mathcal H\cup T$ is a linear space with heavy configuration $\mathcal H$ satisfying (R1)--(R5). We enumerate the completions by backtracking over the points $v=0,1,\dots,d-1$, ordered by decreasing $h(v)$. At step $v$ we choose all triples whose smallest element is $v$, so that $e(v)$ takes its final value, and we prune with (R2), (R5) and capacity bounds as in (H5).

We break symmetry as follows. At step $v$, the points $w>v$ that lie in exactly the same existing blocks form classes, and new triples through $v$ use only initial segments of the currently unused members of each class. Permuting the remaining unused members of such a class fixes the pre-existing blocks, all processed points, and the triples already chosen at this step. Induction over the triple choices therefore shows that every completion is equivalent to one respecting this rule.

Every $L$ satisfying (R1)--(R5) is isomorphic to a completion of the representative of $\mathcal H(L)$. Therefore the completions of all representatives, after deduplication by canonical form, are exactly the elements of $\cL(d;d_1,d_2)$.

\subsection{Results of the enumeration}
\begin{center}
\begin{tabular}{lrrr}
\toprule
type & heavy configurations & completions found & classes $|\cL|$\\
\midrule
$(15;6,8)$ & $1\,952$ & $1\,346$ & $3$\\
$(15;7,7)$ & $96\,725$ & $10\,016\,867$ & $6\,091\,145$\\
$(16;7,8)$ & $8\,324\,811$ & $17\,590\,573$ & $16\,453\,132$\\
\bottomrule
\end{tabular}
\end{center}
For $(16;7,8)$, the canonical augmentation visited $8\,885\,007$ nodes (about $9$ minutes on one core), and the completion visited $2.97\times10^{10}$ nodes (about $25$ minutes).

\section{Exact computation of realization spaces}\label{sec:realization}

Let $L$ be a simple rank $3$ matroid on $\{1,\dots,d\}$. A \emph{realization} of $L$ over a field $K$ (of any characteristic) is an arrangement $(\ell_1,\dots,\ell_d)$ of lines in $\PP^2(K)$, labelled by the elements of $L$, whose labelled lattice, defined exactly as over $\CC$, is $L$.

\subsection{Normal form and construction sequence}
Suppose that $L$ admits a \emph{frame}: four elements, no three of them in a common block. Every residual matroid used below is explicitly checked to admit such a frame. Since $\mathrm{PGL}_3$ acts simply transitively on ordered $4$-tuples of lines in general position, every realization is projectively equivalent to a unique one in which the frame lines are $x=0$, $y=0$, $z=0$, $x+y+z=0$. Order the remaining elements. When an element $\ell$ is placed, call a block \emph{determined} if it contains $\ell$ and at least two placed elements; the corresponding point is then already constructed. There are three kinds of steps:
\begin{itemize}
\item \emph{det}: $\ell$ lies in at least two determined blocks, so it is the line through two constructed points;
\item \emph{pencil}: $\ell$ lies in exactly one determined block, through the point $P=\ell_a\cap\ell_b$, so $\ell=\ell_a+t\,\ell_b$ for a parameter $t$;
\item \emph{free}: $\ell$ lies in no determined block, so it depends on two parameters.
\end{itemize}
In a pencil step, the only line through $P$ not of the form $\ell_a+t\ell_b$ is $\ell_b$, which is already one of the lines of $L$. So no realization is lost. The \emph{cost} of the sequence is the number of parameters. We choose frames and orders greedily so as to minimize the cost; correctness does not depend on this choice, and the program checks explicitly that the chosen frame is in general position. With our greedy choice, all $22\,544\,280$ matroids except one received a sequence of cost at most $1$.

\subsection{One parameter}
For cost $\le1$, every line is represented by a vector $\ell_i(t)\in\ZZ[t]^3$, built by cross products and pencils. Each vector is made primitive, and every common factor removed is recorded as a \emph{forbidden} polynomial; at its roots two distinct points or lines of the construction would coincide. Let $G\in\ZZ[t]$ be the gcd of the \emph{required} determinants $\det(\ell_{b_0},\ell_{b_1},\ell_x)$, for all blocks $B\ni b_0,b_1,x$. Let $N$ consist of the forbidden polynomials and of the determinants $\det(\ell_i,\ell_j,\ell_k)$ of all triples not contained in a block.

\begin{proposition}\label{prop:onepar}
Suppose the sequence has cost $1$. Every complex realization of $L$ is projectively equivalent to exactly one arrangement $(\ell_i(t_0))_i$ with $t_0\in\CC$. This arrangement is a realization if and only if $G(t_0)=0$ and $n(t_0)\ne0$ for all $n\in N$. Consequently:
\begin{enumerate}[label=\textup{(\alph*)}]
\item if some $\ell_i$ is identically $0$, or $G$ is a nonzero constant, or every irreducible factor of $G$ divides an element of $N$, or $G=0$ and some element of $N$ is identically $0$, then $L$ is not realizable over $\CC$;
\item if $G\ne0$, the realizations correspond to the roots of the irreducible factors $g$ of $G$ that divide no element of $N$, and the realizations coming from one factor $g$ form a single orbit under $\Aut(\CC/\QQ)$;
\item if $G=0$, the realizations form the Zariski open subset $\{t_0:\ n(t_0)\ne0\ \forall n\in N\}$ of $\mathbb A^1$.
\end{enumerate}
\end{proposition}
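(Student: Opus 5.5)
The proof has three parts: a normal form for every realization, a translation of the lattice conditions into polynomial conditions on $t$, and the consequences (a)–(c).

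\emph{Normal form.} Start from a complex realization $\cA$ of $L$. The frame lines of $\cA$ are in general position, since no three of the four frame elements lie in a common block. So by simple transitivity of $\mathrm{PGL}_3$ there is a unique projective transformation sending the frame lines of $\cA$ to $x=0$, $y=0$, $z=0$, $x+y+z=0$.

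Then go through the construction sequence by induction. At a det step the line is forced, because two distinct constructed points determine it; the points are distinct since $\cA$ realizes $L$. At the unique pencil step, the line passes through $P=\ell_a\cap\ell_b$ and differs from $\ell_b$, so it equals $\ell_a+t_0\ell_b$ for a unique $t_0\in\CC$. Every subsequent step is again det, or a pencil in which no new parameter appears.

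The one point needing care is that specialization commutes with the construction. The vectors $\ell_i(t)$ are built by cross products and then made primitive, and a removed content factor vanishes only where two constructed points or lines coincide. In a realization these objects are distinct, so no forbidden polynomial vanishes at $t_0$. Hence $\ell_i(t_0)$ is nonzero and represents the same line as the specialized cross product, namely the actual line of $\cA$. This gives existence of the normal form. Uniqueness holds because both the frame normalization and $t_0$ were forced.

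\emph{Characterization.} Assume $(\ell_i(t_0))$ is a realization. Then every required determinant vanishes at $t_0$, so $G(t_0)=0$, since $G$ generates the ideal those determinants generate in $\QQ[t]$. Every non-block triple has nonzero determinant, and every forbidden polynomial is nonzero at $t_0$, as shown above.

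Conversely, assume $G(t_0)=0$ and $n(t_0)\neq0$ for all $n\in N$. The lines are then nonzero, since the forbidden factors do not vanish. They are pairwise distinct: given two distinct lines, pick a third line outside their common block (possible because the lattice is not a pencil); then the non-block triple determinant is nonzero, so the two lines differ. Each block $B$ is concurrent, because all $\det(\ell_{b_0},\ell_{b_1},\ell_x)$ vanish and $\ell_{b_0}\neq\ell_{b_1}$. No other triple is concurrent, because its determinant lies in $N$. So the rank-$2$ flats are exactly the blocks, and the labelled lattice is $L$.

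\emph{Consequences.}
\begin{itemize}
\item[(a)] If some $\ell_i$ is identically zero, no line exists at any $t_0$. If $G$ is a nonzero constant, it has no roots. If every irreducible factor of $G$ divides some $n\in N$, then every root of $G$ kills an element of $N$. If $G=0$ and some $n\in N$ is identically zero, the condition $n(t_0)\neq0$ never holds.
\item[(b)] If $G\neq0$, the valid $t_0$ are exactly the roots of those irreducible factors $g$ of $G$ that divide no $n\in N$. For such $g$ and any $n\in N$, the gcd of $g$ and $n$ is $1$, so no root of $g$ is a root of $n$. The roots of one irreducible $g\in\QQ[t]$ form a single $\Aut(\CC/\QQ)$-orbit, and this action commutes with evaluating the $\QQ$-polynomial data.
\item[(c)] If $G=0$, the realizations are the complement of the zeros of the nonzero elements of $N$, which is Zariski open in $\mathbb A^1$.
\end{itemize}

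The main obstacle is the bookkeeping in the normal-form step: showing that the primitive vectors specialize correctly, with no collapse, exactly when the forbidden polynomials do not vanish, and that no realization escapes the pencil parametrization. The second point is handled by the fact that the missing pencil member $\ell_b$ is already a line of $L$.
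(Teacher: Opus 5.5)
Your proposal is correct and follows the same route as the paper's proof. You normalize the frame, use that det steps are forced and that the single pencil step misses only $\ell_b$, and use that forbidden factors vanish exactly where constructed objects coincide. You then read the block and non-block conditions as $G(t_0)=0$ and $n(t_0)\neq0$, and finish with the irreducible-factor and $\Aut(\CC/\QQ)$-transitivity argument. Compared with the paper's terse proof, you add the inductive specialization argument, the distinctness of lines via non-block triples, and the uniqueness of $t_0$. One harmless slip: the phrase ``a pencil in which no new parameter appears'' is vacuous, since cost $1$ forces exactly one pencil step and no free step.
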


\begin{proof}
The construction reproduces any normalized realization step by step: det steps are forced, and pencil steps were discussed above. If a constructed vector vanishes at $t_0$, then two elements that are distinct in $L$ give the same point or line. The conditions $G(t_0)=0$ and $n(t_0)\ne0$ say exactly that the blocks of $L$ are collinear and that no other triple is. For an irreducible $g\in\QQ[t]$ and $n\in\QQ[t]$, $n$ vanishes at one root of $g$ if and only if $g\mid n$, if and only if $n$ vanishes at all roots of $g$; and the roots of $g$ are permuted transitively by $\Aut(\CC/\QQ)$.
\end{proof}

By the same argument, if the sequence has cost $0$, the vectors $\ell_i$ are integral, $G$ is an integer, and $L$ has at most one realization up to projective equivalence. It exists, and is then defined over $\QQ$, if and only if $G=0$ and no element of $N$ vanishes.

\subsection{Several parameters}\label{subsec:several}
For the single matroid of cost $2$, the construction has two pencil steps and no free step. The same cross-product construction therefore gives vectors in $\ZZ[t_1,t_2]^3$ and covers every normalized realization. Required determinants must vanish and nonblock determinants must be nonzero, exactly as in the one-parameter case.

This class has the following elementary obstruction, verified directly in the ancillary module \texttt{exact\_multi.py} and in the script \path{identifications/certificates.py}. It is the class with index $13095$ (counted from $0$) in the list of type $(15;7,7)$; its elements are numbered from $0$. With the frame $(3,4,7,8)$ and parameters $t_1,t_2$, the construction gives
\[
q=\det(\ell_1,\ell_2,\ell_{14})=-4t_1^2+4t_1-1,\qquad f=\det(\ell_0,\ell_1,\ell_7)=2t_1-1.
\]
The triple $\{1,2,14\}$ lies in a block and the triple $\{0,1,7\}$ does not, so a realization would satisfy $q=0$ and $f\ne0$. This is impossible, since $q+f^2=0$ identically.

\subsection{Deciding freeness}
Freeness is invariant under $\Aut(\CC/\QQ)$ acting on the coefficients, so by Proposition~\ref{prop:onepar}(b) it is constant on each orbit of realizations. By Lemma~\ref{lem:freecrit}, deciding freeness of a realization of type $(d;d_1,d_2)$ amounts to deciding whether the matrix $M_{H,d_1-1}$ has trivial kernel, for one line $H$. Coordinates on $H$ are given by two fixed integral linear forms whose common zero does not lie on $H$.

\begin{proposition}[certificates]\label{prop:cert}
Let $g\in\ZZ[t]$ be a primitive irreducible factor as in Proposition~\ref{prop:onepar}\textup{(b)}, let $K=\QQ[t]/(g)$, and let $\cA_K$ be the corresponding realization.
\begin{enumerate}[label=\textup{(\alph*)}]
\item \emph{(free)} Suppose $p$ is a prime not dividing the leading coefficient of $g$, $r\in\FF_p$ is a root of $g\bmod p$, and the reduced arrangement $\cA_p=(\ell_i(r))_i$ over $\FF_p$ has the same lattice. If $D(\cA_p^H,\mu^H)$ has no nonzero element of degree $d_1-1$ (computed with the exact system of Section~\ref{subsec:ziegler} over $\FF_p$), then $\cA_K$ is free.
\item \emph{(non-free)} Suppose all points $P$ of $H$ have $t_P\ne0$ in $K$. If $M_{H,d_1-1}$ has a nonzero kernel over $K$, computed by exact Gaussian elimination in $K$, then $\cA_K$ is not free.
\end{enumerate}
\end{proposition}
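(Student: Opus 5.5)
Both parts rest on Lemma~\ref{lem:freecrit}: since $L(\cA_K)$ is of type $(d;d_1,d_2)$, $\cA_K$ is free if and only if $D(\cA_K^H,\mu^H)_{d_1-1}=0$. So (a) asks us to transfer vanishing of this graded piece from $\FF_p$ to $K$, and (b) to exhibit a nonzero element over $K$.

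For (b), we use the hypothesis that $t_P\ne0$ for every $P\in H$. In that case the paper's description of the Ziegler multirestriction says that the solution space of $M_{H,d_1-1}$, built with $X$-Hasse derivatives, is exactly $D(\cA_K^H,\mu^H)_{d_1-1}$. A nonzero kernel vector over $K$ is therefore a nonzero derivation of degree $d_1-1$. To pass to $\CC$, we embed $K$ by sending $t$ to a root $t_0$ of $g$; kernel dimension is preserved under field extension. The resulting complex realization then has $e_1^H\le d_1-1<d_1$, so it is not free by Lemma~\ref{lem:freecrit}. By Proposition~\ref{prop:onepar}(b), freeness is constant on the Galois orbit, so the conclusion holds for all realizations coming from $g$. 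This part is routine.

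For (a), we run a semicontinuity argument through a local ring. Let $R$ be the localization of $\ZZ[t]/(g)$ at the maximal ideal $\mathfrak m=(p,t-\tilde r)$, where $\tilde r$ is a lift of $r$. Because $p$ does not divide the leading coefficient of $g$, the residue field is $\FF_p$ and $t\mapsto r$ is well defined; here we may need to pass to the integral closure or simply work in $R$. The line vectors $\ell_i(t)$ have entries in $R$, with reductions $\ell_i(r)$ and generic images $\ell_i$ over $K$. Next we choose affine coordinates on $H$ given by the fixed integral forms. The exact conditions at each point $P$, taking $X$- or $Y$-Hasse derivatives according to whether $t_P$ is a unit, are then polynomial in the entries, giving a matrix $\widetilde M$ over $R$. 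The key check is that $\widetilde M$ specializes to the exact system over $\FF_p$ and over $K$. This needs the point coordinates $(s_P:t_P)$ to be primitive in $R$, and it needs the points of $\cA_p$ on $H$, with their multiplicities, to be the reductions of those of $\cA_K$. Both follow from the hypothesis that $\cA_p$ has the same lattice: no distinct points merge and no block degenerates, so each reduced point is again a genuine point of the right multiplicity. Granting this, the assumption that $\cA_p$ has no degree-$(d_1-1)$ element says $\widetilde M\bmod\mathfrak m$ has full column rank $2d_1$. Some $2d_1$-minor is then a unit in $R$, hence nonzero in $K$, so the kernel over $K$ is trivial and $\cA_K$ is free by Lemma~\ref{lem:freecrit}.

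The main obstacle is this compatibility of the exact systems under reduction. If a point $P$ has $t_P\equiv0\bmod\mathfrak m$ but $t_P\ne0$ in $K$, the $\FF_p$ system uses $Y$-derivatives while the $K$ system uses $X$-derivatives. I would resolve this by not comparing equations at all and instead comparing the modules directly. Over any field, the condition at $P$ is divisibility of $h_P$ by $(t_PX-s_PY)^{\mu^H(P)}$. That condition is intrinsic, and it is given by a rank-$\mu^H(P)$ family of linear conditions defined over $R$ after normalizing $(s_P,t_P)$ to be primitive. Since the reduction is a flat specialization of $R$-submodules, their intersection can only grow at the special fibre, so $\dim_K D(\cA_K^H,\mu^H)_{d_1-1}\le\dim_{\FF_p}D(\cA_p^H,\mu^H)_{d_1-1}=0$.
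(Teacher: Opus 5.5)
Your part (b), and the skeleton of your part (a), are the paper's proof. You build a matrix over a subring of $K$ that maps onto $\FF_p$: your local ring $R$, the paper's image $\Lambda$ of $\ZZ[1/\mathrm{lc}(g)][t]$. At each point you choose the direction according to whether $t_P$ reduces to $0$, and you observe that a maximal minor which is nonzero modulo $\mathfrak m$ is nonzero in $K$.

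\textbf{The gap.} The step you call the key check is justified incorrectly. You claim that primitivity of $(s_P,t_P)$ follows from $\cA_p$ having the same lattice. It does not. The pair is $(s_P,t_P)=(u(P),v(P))$ for the two fixed integral forms $u,v$, and those forms are only required to have their common zero $Q$ off $H$ over $K$. If $\bar Q$ lies on $\bar H$ over $\FF_p$, then $\bar u|_{\bar H}$ and $\bar v|_{\bar H}$ are proportional. All reduced pairs are then proportional, and one of them may be $(0,0)$. In that case $\widetilde M\bmod\mathfrak m$ is not the system of $\cA_p^H$ at all, however faithful the lattice is, and the hypothesis on $\cA_p$ cannot be transferred.

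\textbf{The fix.} The missing ingredient is the paper's explicit choice of integral coordinate forms whose common zero avoids $H$ also modulo $p$. Equivalently, you can use an $R$-basis of the rank-$2$ direct summand of $R^3$ cut out by $\ell_H$. With that choice the lattice hypothesis does the rest. Writing $P=\ell_a\times\ell_b$, you have $\bar P=\bar\ell_a\times\bar\ell_b\ne0$, so $(\bar s_P,\bar t_P)\ne(0,0)$. Distinct points of $H$ stay distinct, and the multiplicities are unchanged.

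\textbf{The "main obstacle" is illusory.} Under the rule you already adopted in your second paragraph ($X$- or $Y$-derivatives according to whether $t_P$ is a unit in $R$), both fibres use the same direction at every point. The $Y$-system is exact over $K$ as well, because once $(s_P,t_P)$ is primitive and $t_P\in\mathfrak m$, $s_P$ is a unit. So your third paragraph's worry that the $\FF_p$ system uses $Y$ while the $K$ system uses $X$ never arises, and the module-theoretic detour is unnecessary.

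That detour is also shaky as written. It relies on ``normalizing $(s_P,t_P)$ to be primitive'', which is not generally possible in $R$: $R$ need not be a DVR, and the ideal $(s_P,t_P)$ need not be principal. Even in a DVR, normalizing would not repair a degenerate choice of forms; it would only produce the coordinates of a collapsed multiarrangement. Once the forms are chosen correctly, the pairs are already primitive, and your minor argument goes through exactly as in the paper.
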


\begin{proof}
We apply Lemma~\ref{lem:freecrit} to $\cA_K$ through an embedding $K\hookrightarrow\CC$. (a) Let $\Lambda$ be the image of $\ZZ[1/\mathrm{lc}(g)][t]$ in $K$; it maps to $\FF_p$ by $t\mapsto r$. Choose coordinate forms on $H$ with integral coefficients whose common zero avoids $H$ also modulo $p$. The coordinates $(s_P,t_P)$ of the points of $H$ then lie in $\Lambda$ and reduce to the coordinates of the points of $\cA_p$ on $H$, which are distinct because the lattices agree. At each point use the $X$-direction if $t_P\not\equiv0$ and the $Y$-direction otherwise. The resulting matrix over $\Lambda$ is exact both over $K$ and, after reduction, over $\FF_p$. Its rank over $\FF_p$ is at most its rank over $K$, since a nonzero maximal minor modulo $p$ is the image of a nonzero minor. Hence a trivial kernel modulo $p$ gives a trivial kernel over $K$, so $e_1^H\ge d_1$ and $\cA_K$ is free. (b) Under the hypothesis the system is exact, so $e_1^H\le d_1-1$ and $\cA_K$ is not free.
\end{proof}

For a family ($G=0$), let $D_H\in\ZZ[t]$ be the determinant of $M_{H,d_1-1}(t)$ when the matrix is square. Otherwise let $D_H$ be the gcd of the determinants of a few random integral square combinations $R\cdot M_{H,d_1-1}(t)$. If a member $\cA_{t_0}$ is not free, then the rank of $M_{H,d_1-1}(t_0)$ drops and $D_H(t_0)=0$. This holds even where the $X$-direction encoding is not exact, because that encoding only enlarges the kernel (Section~\ref{subsec:ziegler}). We take three lines $H$ such that the common zero of the two coordinate forms lies on $H(t_0)$ for no $t_0$ in the realization space, so that the forms restrict to coordinates on every $H(t_0)$, and let $D=\gcd_HD_H$. (For $F_1$ and $F_2$, the only lattices to which this criterion is applied, the three lines and the forms do not depend on $t$. If a point of $H$ with $\mu^H(P)\ge2$ had $t_P\equiv0$, then $D_H$ would vanish identically and the criterion would simply fail.) If $D\ne0$ and every irreducible factor of $D$ divides an element of $N$, then every realization in the family is free. If $D$ is a nonzero constant, all members are free.

In all certificates of type (b) that occur, we checked that every $t_P$ is nonzero in $K$.

\section{Results}\label{sec:results}

Applied to all classes of Section~\ref{sec:enumeration} (we call this the \emph{screening}), the computation of Section~\ref{sec:realization} gives the following verdicts.

\begin{center}\small
\begin{tabular}{p{6.4cm}rrr}
\toprule
& $(15;6,8)$ & $(15;7,7)$ & $(16;7,8)$\\
\midrule
classes & $3$ & $6\,091\,145$ & $16\,453\,132$\\
not realizable: $G$ constant, or each factor of $G$ divides an element of $N$ & $2$ & $6\,021\,701$ & $16\,175\,061$\\
not realizable: a constructed vector vanishes identically & $0$ & $69\,433$ & $278\,071$\\
not realizable: elementary certificate (cost $2$) & $0$ & $1$ & $0$\\
realizable: finitely many (one Galois orbit) / one family & $1$ / $0$ & $9$ / $1$ & $0$ / $0$\\
\bottomrule
\end{tabular}
\end{center}

The remaining case of Proposition~\ref{prop:onepar}(a), $G=0$ with an element of $N$ vanishing identically, does not occur. Each realizable lattice has a single Galois orbit of realizations, or a single family. The orbit of $F_2$ has one element, since its construction has cost $0$. (The program treats this cost-$0$ case like a family with constant lines, so the ancillary file \texttt{results/verdict\_15\_77.txt} lists $8$ classes with status \texttt{points} and $2$ with status \texttt{family}; in the table above $F_2$ is counted among the lattices with finitely many realizations.) All eleven are listed in Table~\ref{tab:realizable}. Their blocks are listed in Appendix~\ref{app:blocks}. Explicit equations of all of them are in the ancillary file \texttt{results/realizable\_lattices.txt}.

\begin{table}[ht]
\centering\small
\begin{tabular}{lcccccl}
\toprule
& $(t_2,t_3,t_4,t_5,t_6)$ & $r_H$ & $m$ & field & realizations & freeness, remarks\\
\midrule
$L_0$ & $(0,25,0,3,0)$ & $6^{15}$ & 5 & $\QQ(\zeta_5)$ & 4 (one orbit) & free $(6,8)$; $\cA(5,5,3)$\\
$F_1$ & $(27,6,0,6,0)$ & $6^3\,7^{12}$ & 5 & $\QQ(a)$ & $\mathbb A^1$ minus 7 points & free $(7,7)$; Cuntz's family\\
$F_2$ & $(21,6,6,3,0)$ & $6^6\,7^9$ & 5 & $\QQ$ & 1 & free $(7,7)$\\
$F_3$ & $(21,6,6,3,0)$ & $6^6\,7^9$ & 5 & $\QQ(\sqrt{-3})$ & 2 (one orbit) & free $(7,7)$; $\subset\cA(G_{26})$\\
$F_4$ & $(21,6,6,3,0)$ & $6^6\,7^9$ & 5 & $\QQ(\sqrt5)$ & 2 (one orbit) & free $(7,7)$\\
$F_5$ & $(19,6,8,2,0)$ & $6^7\,7^8$ & 5 & $\QQ(\sqrt{-3})$ & 2 (one orbit) & free $(7,7)$; $\subset\cA(G_{26})$\\
$N_1$ & $(27,0,8,3,0)$ & $5\,6^2\,7^{12}$ & 5 & $\QQ(i)$ & 2 (one orbit) & not free\\
$N_2$ & $(22,8,4,2,1)$ & $4\,6^2\,7^{12}$ & 6 & $\QQ(i)$ & 2 (one orbit) & not free\\
$N_3$ & $(24,12,0,0,3)$ & $6^3\,7^{12}$ & 6 & $\QQ(\zeta_5)$ & 4 (one orbit) & not free; $\subset\cA(5,1,3)$\\
$N_4$ & $(17,16,0,1,2)$ & $6^6\,7^9$ & 6 & $\QQ(\zeta_5)$ & 4 (one orbit) & not free; $\subset\cA(5,1,3)$\\
$N_5$ & $(10,20,0,2,1)$ & $6^9\,7^6$ & 6 & $\QQ(\zeta_5)$ & 4 (one orbit) & not free; $\subset\cA(5,1,3)$\\
\bottomrule
\end{tabular}
\caption{The realizable lattices in $\cL(15;6,8)\cup\cL(15;7,7)$; $\cL(16;7,8)$ contains none. The column $r_H$ gives the distribution of the number of points per line ($6^3\,7^{12}$: three lines with $6$ points and twelve with $7$). ``Realizations'' counts labelled realizations modulo $\mathrm{PGL}_3(\CC)$.}
\label{tab:realizable}
\end{table}

\begin{proof}[Proof of the Main Theorem]
Let $L$ be a Terao counterexample with $d\in\{15,16\}$ lines. By Proposition~\ref{prop:reduction}, $L\in\cL(15;6,8)\cup\cL(15;7,7)\cup\cL(16;7,8)$. By Section~\ref{sec:enumeration}, these sets are completely known. By Section~\ref{sec:realization} and the table at the beginning of this section, exactly eleven of their elements are realizable over $\CC$. For each of them, freeness is constant on the set of all complex realizations:
\begin{itemize}
\item for the nine lattices $L_0$, $F_3$, $F_4$, $F_5$, $N_1,\dots,N_5$, the realizations form one Galois orbit, whose freeness is certified by Proposition~\ref{prop:cert};
\item for $F_2$, whose construction has cost $0$ and whose unique realization is defined over $\QQ$, by Lemma~\ref{lem:freecrit}: the matrix $M_{H,6}$ has trivial kernel over $\QQ$ (in the family criterion of Section~\ref{sec:realization}, $D$ is a nonzero constant);
\item for $F_1$, by the family criterion ($D\ne0$, and every irreducible factor of $D$ divides an element of $N$); see also the certificate in Section~\ref{subsec:ident}.
\end{itemize}
Hence $L$ cannot be a Terao counterexample.
\end{proof}

\begin{proof}[Proof of Theorem~\ref{thm:classification}]
Let $\cA$ be as in the theorem. Abe's restriction (proof of Lemma~\ref{lem:abe}) and the absence of divisional freeness give $r_H\le d_1$ for all $H$. Together with $m\le d_1-1$, $r_H\ge4$ and Lemmas~\ref{lem:counting} and~\ref{lem:boj}, $L(\cA)$ satisfies (R1)--(R5) for its type. Since $X>0$ we have $m\ge3$, hence $d_1\ge4$, so Table~\ref{tab:reduction} covers all possible types. Table~\ref{tab:reduction} leaves only the three types above, and $L(\cA)$ is one of the realizable lattices of Table~\ref{tab:realizable} that has a free realization. Conversely, these six lattices have $m=5\le d_1-1$, $r_H\in\{6,7\}$, and $r_H\notin\{d_1+1,d_2+1\}$ for all $H$; so all their complex realizations, which are free by Section~\ref{sec:results}, satisfy the hypotheses and are not divisionally free.
\end{proof}

\subsection{Identifications}\label{subsec:ident}
\begin{itemize}
\item \emph{$L_0$.} The reflection arrangement $\cA(5,5,3)$ is given by $\prod_{k=0}^4(x-\zeta^ky)(y-\zeta^kz)(x-\zeta^kz)$ with $\zeta=e^{2\pi i/5}$. Its $15$ lines form the $3$-net of the cyclic group of order $5$, and the five lines of each class are concurrent. There are exactly two main classes of Latin squares of order $5$. Correspondingly, $\cL(15;6,8)$ contains a second matroid with $(t_2,\dots,t_6)=(0,25,0,3,0)$: a $3$-net of the non-cyclic class whose three classes are pencils. It is not realizable, which is also seen directly. Each centre lies on the five lines of its class, and $m=5$, so no line of the arrangement passes through two centres.
\begin{itemize}
\item If the three centres are not collinear, take them to be the coordinate points. The lines are then $x=ay$, $y=bz$, $x=cz$ with $a\in A$, $b\in B$, $c\in C$, where $A,B,C\subset\CC^*$ have five elements, and concurrency means $c=ab$. So $aB=C$ for all $a\in A$, and $AA^{-1}$ lies in the stabilizer of $B$, a subgroup of order at most $5$. Hence $A$ and $B$ are cosets of $\mu_5$, and the Latin square is cyclic.
\item If the centres are collinear, a projective change of coordinates turns the classes into the parallel classes $x=a$, $y=b$, $x+y=c$. Then $A+B\subseteq C$, which contradicts $|A+B|\ge|A|+|B|-1=9$ in $(\CC,+)$.
\end{itemize}
The $3$-net alone, without concurrent classes, can be realized for a non-group Latin square of order $5$; compare \cite{Sti07}. Barakat and Kühne \cite[Sect.~8 of the arXiv version]{BK23} determined the non-free loci of the matroids underlying the reflection arrangements of the groups $G(n,n,3)$ for $n<10$; for prime $n$ they found that a representation is free if and only if the characteristic of the field is not $n$. For $n=5$ this already shows that every complex arrangement with lattice $L_0$ is free.
\item \emph{$N_3$, $N_4$, $N_5$.} The lattices of sub-arrangements below were computed exactly, with coefficients in the cyclotomic integers (ancillary script \path{identifications/ident.py}). The $816$ sub-arrangements of $15$ lines of $\cA(5,1,3)=\cA(5,5,3)\cup\{x,y,z\}$ have $11$ distinct lattices. Four of them satisfy (R1)--(R5): $L_0$ (remove $x,y,z$), and $N_3$, $N_4$, $N_5$, of type $(15;7,7)$.
\item \emph{$F_3$, $F_5$.} These are sub-arrangements of the reflection arrangement of the Shephard--Todd group $G_{26}$. With $\omega=e^{2\pi i/3}$, $\cA(G_{26})$ consists of the $21$ lines $x,y,z$, $x-\omega^jy$, $y-\omega^jz$, $x-\omega^jz$ and $x+\omega^jy+\omega^kz$ ($0\le j,k\le2$); it has $(t_2,t_3,t_4,t_5)=(36,0,9,12)$ and exponents $(1,7,13)$. Its $15$-line sub-arrangements have $206$ distinct lattices, and exactly two of them satisfy (R1)--(R5).
\item \emph{$\cA(6,1,3)$.} None of its $231$ lattices of $15$-line sub-arrangements satisfies (R1)--(R5).
\item \emph{$F_1$.} This is the lattice of Cuntz's arrangements $\cA_t$ \cite[§4.2]{Cun14}; see Appendix~\ref{app:blocks}. In Cuntz's parametrization, an exact computation (ancillary script \path{identifications/cuntz.py}) shows that $\cA_t$ has lattice $F_1$ exactly for
\[
t\notin\{0,1,\tfrac12\}\cup\{t:\ t^2+t-1=0\}\cup\{t:\ t^2-3t+1=0\}.
\]
\begin{itemize}
\item For $t\in\{0,1,\frac12\}$, some of the $15$ lines coincide.
\item At the four roots of $t^2+t-1$ and $t^2-3t+1$, the lattice becomes that of the Coxeter arrangement $\cA(H_3)$, with $(t_2,t_3,t_4,t_5)=(15,10,0,6)$ and exponents $(1,5,9)$.
\end{itemize}
In the published version \cite[§7]{ACKN16} the family is given in a different, symmetric parametrization, and \cite[Prop.~7.2]{ACKN16} describes its exceptional parameters correctly; its generic member also has lattice $F_1$ (checked by the same script). The description in the preprint \cite[§4.2]{Cun14} lists $-1$ and $\frac32\pm\sqrt2$ instead of $\frac{3\pm\sqrt5}2$ among the exceptional parameters. The arrangement $\cA_t$ is free with exponents $(1,7,7)$ \cite[Prop.~7.2]{ACKN16}, and Theorem~\ref{thm:classification} confirms that every complex arrangement with lattice $F_1$ is free.

The script \path{identifications/certificates.py} verifies the following short certificate. We use the normalized parameter $a$ of the ancillary files (not Cuntz's $t$); the arrangements $\cA(a)$ are given explicitly in \path{results/realizable_lattices.txt}. The irreducible factors of the polynomials in $N$ are exactly the factors of
\[
B(a)=a(a-1)(a+1)(a^2-a-1)(a^2+a-1),
\]
so by Proposition~\ref{prop:onepar}(c) the realizations of $F_1$ are the arrangements $\cA(a)$ with $B(a)\ne0$. The first line of $\cA(a)$ is $z=0$. In the coordinates $(x,y)$ on it, the Ziegler multirestriction is
\[
x^2(x+y)^4y^4\,(x-ay)\,\bigl((a-1)x-y\bigr)\,\bigl((1-a)x-ay\bigr)\,(ax-y).
\]
Represent its points by $(s_P,t_P)=(0,1),(1,-1),(1,0),(a,1),(1,a-1),(a,1-a),(1,a)$, in this order, and use the exact system of Section~\ref{subsec:ziegler}, with Hasse derivatives in $X$ at the points with $t_P\ne0$ and in $Y$ at $(1,0)$. With the rows ordered by points and then by increasing order of the derivative, and the unknowns ordered as the coefficients of $g_1$ and then of $g_2$, by decreasing powers of $X$, the $14\times14$ matrix of conditions for a derivation of degree $6$ satisfies
\[
\det M(a)=2\,a^6(a-1)^4(a+1)^4(a^2-a-1)^2(a^2+a-1)^2.
\]
So $\det M(a)\ne0$ whenever $B(a)\ne0$, and by Lemma~\ref{lem:freecrit} every realization of $F_1$ is free.
\end{itemize}
We have not tried to identify $F_2$, $F_4$, $N_1$ and $N_2$ in the literature. $F_2$ is a real arrangement defined over $\QQ$, with defining polynomial
\begin{multline*}
xyz\,(x+y)(x+z)(y+2z)(x+y+z)(x+y+2z)(x-y-z)\\
\cdot(x-y+2z)(x+2y+2z)(2x+y+z)(2x+y+2z)(2x+y+4z)(2x+y-2z).
\end{multline*}

\section{Reproducibility}\label{sec:reproducibility}

The ancillary archive contains:
\begin{itemize}
\item the C programs \texttt{heavyaug} (Stage~1) and \texttt{lsgen3} (Stage~2), which require nauty;
\item the Python modules for the exact analysis of Sections~\ref{sec:realization} and~\ref{sec:results}, which use python-flint \cite{FLINT} and SymPy; the class of cost~$2$ is excluded by the elementary certificate of Section~\ref{subsec:several};
\item the scripts that compute the identifications of Section~\ref{subsec:ident} exactly and verify the short certificates of Sections~\ref{subsec:several} and~\ref{subsec:ident}, in the folder \texttt{identifications};
\item the results, including explicit equations of all realizable lattices;
\item a script \texttt{run.sh} that reruns the enumeration and screening of each type, checks the expected counts and canonical-list checksums, and compares all verdicts and detailed records with the supplied results. Its \texttt{--certificates} mode recomputes the eleven realizable cases, the cost-$2$ obstruction and the stated identifications, without rerunning the exhaustive enumeration.
\end{itemize}
Every entry of the lists is checked to encode a linear space satisfying (R1)--(R5) when it is screened, and the final verdict rejects any missing, duplicate, out-of-range or inconsistent record. The original computation used gcc 13.3, nauty 2.8.8, \textsc{Singular} 4.3.2, Python 3.13 and python-flint 0.9.0. Exact screening took about $0.9$--$1.2$\,ms per class: $1.5$ CPU-hours for $(15;7,7)$ and $5.5$ for $(16;7,8)$. The lists $\cL(d;d_1,d_2)$, as sorted files of canonical \texttt{graph6} strings, have SHA-256 checksums
\begin{center}\footnotesize
\begin{tabular}{ll}
$\cL(15;6,8)$ & \texttt{8de6cde5fc270e15cae212de4ee41d5202197e8aeab78cf6e65614650746dcd9}\\
$\cL(15;7,7)$ & \texttt{f1be0ac91ef3d5e230d2812b08c3e67e2090445398d9bfd5f7905ab0677a9040}\\
$\cL(16;7,8)$ & \texttt{12a4cb695442e91e9d7fc55e9d5abc608c8cf99c14d96f308a4876e668fc9f4d}
\end{tabular}
\end{center}
These refer to the canonical forms of nauty 2.8.8; the reproduction script requires these checksums and stops on a mismatch. Generated lists and intermediate output are not distributed, since the supplied programs regenerate them. The supplied reproduction scripts do not require Singular.

\section*{Statements and declarations}

\noindent\textbf{Competing interests.} The authors have no financial interests to disclose.

\noindent\textbf{Data availability.} The code and the results of the computations are available as ancillary files of the arXiv version of this paper (Section~\ref{sec:reproducibility}).

\noindent\textbf{Use of generative AI.}
Claude (Anthropic) was used for literature search, programming, and drafting the text; ChatGPT (OpenAI) for auditing, and editorial assistance. After using these tools, the authors reviewed and edited the content and takes full responsibility for the content of the publication.

\appendix
\section{Blocks of the realizable lattices}\label{app:blocks}
Lines are numbered $1,\dots,9,\mathtt a,\dots,\mathtt f$ (so $\mathtt a=10,\dots,\mathtt f=15$). Each block, a point of multiplicity at least $3$, is written as the word of its lines. The numbering is the canonical one of the ancillary file.

\medskip
\begingroup\raggedright
\noindent\textbf{$L_0$}, type $(15;6,8)$, $\cA(5,5,3)$.\enspace{\small\texttt{12345 6cdef 789ab 167 18c 19d 1af 1be 269 27c 28e 2ad 2bf 368 37d 39f 3ae 3bc 46b 47f 48d 49e 4ac 56a 57e 58f 59c 5bd}}\par\smallskip
\noindent\textbf{$F_1$}, type $(15;7,7)$, Cuntz's family.\enspace{\small\texttt{13bcf 149ae 2367e 2458f 57acd 689bd 12d 34d 5be 6af 79f 8ce}}\par\smallskip
\noindent\textbf{$F_2$}, type $(15;7,7)$.\enspace{\small\texttt{124ad 136be 235cf 189c 279b 378a 457e 469f 568d 7df 8ef 9de abf ace bcd}}\par\smallskip
\noindent\textbf{$F_3$}, type $(15;7,7)$, sub-arrangement of $\cA(G_{26})$.\enspace{\small\texttt{127ad 138be 239cf 145c 246b 356a 489d 579e 678f 4ef 5df 6de 7bc 8ac 9ab}}\par\smallskip
\noindent\textbf{$F_4$}, type $(15;7,7)$.\enspace{\small\texttt{145ab 278ac 369bc 168e 179d 249e 256f 347f 358d 46d 57e 89f adf bef cde}}\par\smallskip
\noindent\textbf{$F_5$}, type $(15;7,7)$, sub-arrangement of $\cA(G_{26})$.\enspace{\small\texttt{1389b 247ab 126c 145f 235e 346d 578c 59ad 679f 68ae 7de 8df 9ce acf bcd bef}}\par\smallskip
\noindent\textbf{$N_1$}, type $(15;7,7)$.\enspace{\small\texttt{12345 169cd 178ab 2ade 2bcf 37ce 38df 46af 489e 56be 579f}}\par\smallskip
\noindent\textbf{$N_2$}, type $(15;7,7)$.\enspace{\small\texttt{123456 18def 19abc 279d 37be 47cf 578a 2ae 39f 48b 5cd 689 6af 6bd 6ce}}\par\smallskip
\noindent\textbf{$N_3$}, type $(15;7,7)$, sub-arrangement of $\cA(5,1,3)$.\enspace{\small\texttt{124567 138ace 239bdf 4ab 4cf 4de 589 5ad 5bc 68b 69a 6ef 78f 79e 7cd}}\par\smallskip
\noindent\textbf{$N_4$}, type $(15;7,7)$, sub-arrangement of $\cA(5,1,3)$.\enspace{\small\texttt{5bcdef 6789af 12345 16b 17e 18d 1ac 27c 28e 29d 2ab 36c 37d 39b 3ae 46e 48b 49c 4ad}}\par\smallskip
\noindent\textbf{$N_5$}, type $(15;7,7)$, sub-arrangement of $\cA(5,1,3)$.\enspace{\small\texttt{123456 1789a bcdef 27b 28d 29f 2ae 37e 38c 39d 3af 47f 48b 49c 4ad 57c 58f 59e 5ab 67d 68e 69b 6ac}}\par\smallskip
\endgroup

\medskip
\noindent\emph{Cuntz's family.} Following \cite[§4.2]{Cun14}, $\cA_t$ is defined by the normal vectors
\[
\begin{gathered}
(1,0,0),\ (1,1,0),\ (1,0,1),\ (1,1,1),\ (1,t,1),\ (0,1,0),\ (2,1,1),\ (t+1,t,1),\\
(t+1,1,1),\ (2t,t,1),\ (1,1-t,1),\ (1-3t,\ t^2-3t+1,\ -t),\\
(3t-1,\ t,\ t),\ (1-3t,\ -t^2,\ -t),\ (3t-1,\ 2t-1,\ t).
\end{gathered}
\]

\end{document}